\documentclass[12pt]{article}
\usepackage[utf8]{inputenc}
\usepackage{amsthm}
\usepackage{amsmath}
\usepackage{mathtools}
\usepackage{bbm}
\usepackage{amsfonts}
\usepackage{amssymb}
\usepackage[left=2.5cm,right=2.5cm,top=2cm,bottom=2cm]{geometry}

\usepackage[dvipsnames]{xcolor}

\usepackage{tabto}
\TabPositions{2 cm, 4 cm, 6 cm, 8 cm}

\usepackage{tikz-cd}

\usepackage{tikz}
\usetikzlibrary{arrows, automata, positioning}

\usepackage{tocloft}
\setlength\cftbeforetoctitleskip{5cm}
\setlength\cftaftertoctitleskip{2cm}

\usepackage{appendix}

\usepackage{enumerate}

\usepackage{hyperref}
 \newcommand{\Manoa}{M\=anoa }
\newcommand{\Hawaii}{Hawai\kern.05em`\kern.05em\relax i }
\newtheorem{theorem}{Theorem}[section]
\newtheorem{corollary}[theorem]{Corollary}
\newtheorem{lemma}[theorem]{Lemma}

\newtheorem{proposition}[theorem]{Proposition}

\theoremstyle{definition}
\newtheorem{definition}[theorem]{Definition}
\newtheorem{example}[theorem]{Example}
\newtheorem*{definition*}{Definition}

\theoremstyle{remark}

\newtheorem*{lemma*}{Lemma}
\newtheorem*{proposition*}{Proposition}
\newtheorem*{theorem*}{Theorem}
\newtheorem*{corollary*}{Corollary}
\newtheorem*{claim*}{Claim}

\theoremstyle{definition}

\newtheorem{question}[theorem]{Question}



\newcommand{\Homeo}{\operatorname{Homeo}}



\newcommand{\rot}{\operatorname{rot}}



\begin{document}

\title{Finitely presented left orderable monsters}
\author{Francesco Fournier-Facio, Yash Lodha and Matthew C. B. Zaremsky}
\date{\today}
\maketitle

\begin{abstract}
A left orderable monster is a finitely generated left orderable group all of whose fixed point-free actions on the line are \emph{proximal}: the action is semiconjugate to a minimal action so that for every bounded interval $I$ and open interval $J$, there is a group element that sends $I$ into $J$. In his 2018 ICM address, Navas asked about the existence of left orderable monsters. By now there are several examples, all of which are finitely generated but not finitely presentable. We provide the first examples of left orderable monsters that are finitely presentable, and even of type $F_\infty$. These groups satisfy several additional properties separating them from the previous examples: they are not simple, they act minimally on the circle, and they have an infinite-dimensional space of homogeneous quasimorphisms. Our construction is flexible enough that it produces infinitely many isomorphism classes of finitely presented (and type $F_{\infty}$) left orderable monsters.
\end{abstract}

\section{Introduction}

A group $G$ is \emph{left orderable} if it admits a total order which is invariant under left multiplication by any group element.
This concept has a remarkable connection with dynamics of group actions on the line: a countable group $G$ is left orderable if and only if it admits a faithful action by orientation-preserving homeomorphisms on the real line. 

One of the fundamental questions in the field is to understand the following. Given a left orderable group, what are the possible actions on the line by orientation-preserving homeomorphisms?  It is folklore that up to semiconjugacy, every action of a finitely generated left orderable group on the line by orientation-preserving homeomorphisms and without global fixed points is of one of three types \cite[Theorem 3.5.19]{book}:

\begin{enumerate}
    \item \emph{Cyclic} or \emph{type 1}, if there exists an invariant Radon measure on $\mathbf{R}$, in which case the group surjects onto the integers.
    \item \emph{Locally proximal} or \emph{type 2}, if it does not preserve a Radon measure on $\mathbf{R}$, yet it is semiconjugate to a minimal action that commutes with integer translations. Equivalently, such an action descends to a minimal and proximal action on the circle (see Proposition \ref{prop:type2:prox}).
    \item \emph{Proximal} or \emph{type 3}, if for every bounded interval $I$ and open interval $J$, there is a group element $f$ such that $I\cdot f\subset J$.
\end{enumerate}

The above framework has shed light on various algebraic and dynamical properties of left orderable groups, and on the question of when certain groups are left orderable.
For example, it was unknown until very recently whether lattices in higher-rank semisimple Lie groups admit actions on the line or not; it turns out that outside some exceptional cases they do not \cite{deroin:hurtado}. Before this breakthrough solution, given that lattices do not admit actions of type 1 or 2 \cite{lattices:ghys, lattices:bm, rigidity} it was thought that a promising approach could be to leverage finite generation to also rule out type 3 actions. More succinctly, \cite[Question 4]{questions} (see also \cite[Question 3.5.11]{book}) asks:

\begin{question}[Navas ICM $2018$]
Do there exist finitely generated left orderable groups all of whose fixed point-free actions by homeomorphisms on the line are of type 3? 
\end{question}

\begin{definition}[Navas]
A finitely generated left orderable group $G$ is a \emph{left orderable monster} if all of its fixed point-free actions by homeomorphisms on the line are of type 3.
\end{definition}

Given the negative solution in \cite{deroin:hurtado} to the question concerning higher-rank lattices, it is intriguing that in fact left orderable monsters \emph{do} exist. This fact was proved independently by Matte Bon and Triestino \cite{Tphi}, and by Hyde, the second author, Navas and Rivas \cite{Grho, Grho:up}. In both cases the examples are families of groups whose construction is non-trivial, and that have a number of stronger properties. First, both families of groups are simple: a property much stronger than the absence of type 1 actions. Secondly, both groups are uniformly perfect, and even have vanishing second bounded cohomology \cite{spectrum}; in fact they admit no fixed point-free action on the circle whatsoever \cite{Tphi, spectrum} -- these properties are much stronger than the absence of type 2 actions. 

However, the groups in \cite{Tphi,Grho, Grho:up} are \emph{not} finitely presentable since each is a non-trivial limit of markings of a finitely presented group in the space of marked groups. Therefore one might hope that the above still yields an alternative approach to disprove left orderability of lattices, namely leveraging finite presentability to rule out type 3 actions. In particular, one may hope that this approach works for lattices in products of trees \cite{burger:mozes}, whose left orderability is still an open question. Indeed, it follows from \cite[Corollary 26]{rigidity} that if they are left orderable, then they must be left orderable monsters. One of the key motivations of this article is to show that this modified approach cannot work, that is, there do in fact exist finitely presentable left orderable monsters.

Our examples are very different in nature from the ones in \cite{Grho, Tphi}. First, as we already mentioned they are finitely presentable, and even of type $F_\infty$ (finite presentability is equivalent to admitting a classifying space with a finite $2$-skeleton, and type $F_\infty$ means there is a classifying space with a finite $n$-skeleton for all $n$). Secondly, the absence of type 1 or type 2 actions is shown via a direct dynamical argument, and not via some much stronger property such as simplicity, uniform perfection or vanishing of second bounded cohomology. Indeed, each of these properties is shown to fail. Lastly, in our construction the group itself is much more straightforward than in previous examples.

We now describe a concrete instance of our general construction.
Let $\overline{T}$ denote the lift of Thompson's group $T$ to the real line. Let $z$ denote the generator of the center of $\overline{T}$.
Let $\overline{T}_{2,3}$ denote the lift of the Stein-Thompson group $T_{2,3}$, and let $g\in \overline{T}_{2,3}$ denote a lift of an element with an irrational rotation number in $T_{2,3}$. Such elements exist, e.g., it was proved in \cite{liousse} that $T_{2,3}$ contains elements whose rotation number equals $\frac{\log 2}{\log 3}$.
 We now take the amalgamated product and define:
$$ M := \overline{T} *_{z = g} \overline{T}_{2,3}.$$
Now we can state our main result:

\begin{theorem}
\label{thm:main23}
The group $M$ is a left orderable monster.
\end{theorem}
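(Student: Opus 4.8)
The plan is to prove the theorem by checking the three requirements in the definition of a left orderable monster: $M$ is finitely generated, $M$ is left orderable, and every fixed point‑free action of $M$ on the line is of type 3. Finite generation is clear since $\overline{T}$ and $\overline{T}_{2,3}$ are. For left orderability I would exhibit a faithful action on $\mathbf{R}$: in the standard action of $\overline{T}_{2,3}$ the element $g$ has nonzero translation number $\alpha$, hence acts without fixed points and is therefore conjugate to the translation $x\mapsto x+1$, which is how $z$ acts in the standard action of $\overline{T}$; conjugating the latter makes the two actions agree on $z=g$, so they glue to an action of $M$ on $\mathbf{R}$, whose faithfulness I would verify by a ping‑pong argument along the Bass--Serre tree of the amalgam. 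By the three‑way classification quoted in the introduction it then remains to exclude types 1 and 2.

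Type 1 is ruled out by showing $M^{\mathrm{ab}}$ is finite, since a type 1 action produces a nonzero homomorphism $M\to\mathbf{R}$. Using that $T$ is simple with nondegenerate Euler class one gets that $\overline{T}$ is perfect, and similarly that $\overline{T}_{2,3}^{\mathrm{ab}}$ is finite; as $M^{\mathrm{ab}}=(\overline{T}^{\mathrm{ab}}\oplus\overline{T}_{2,3}^{\mathrm{ab}})/\langle[z]-[g]\rangle$, it is finite.

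The heart of the matter is excluding type 2 actions. Suppose $\rho\colon M\to\Homeo^+(\mathbf{R})$ is fixed point‑free and of type 2; after passing to a semiconjugate I may assume $\rho$ is minimal and commutes with a translation $\tau$, and I write $\widetilde{\rot}$ for the resulting translation number quasimorphism on $\Homeo^+_\tau(\mathbf{R})$. I would then argue:
\begin{enumerate}[(a)]
\item $\rho(z)\neq\mathrm{id}$: otherwise $\rho$ factors through $M/\langle\langle z\rangle\rangle\cong T\ast(\overline{T}_{2,3}/\langle\langle g\rangle\rangle)$, and since $T$ is simple with torsion and $\overline{T}_{2,3}/\langle\langle g\rangle\rangle$ is a finite cyclic quotient of $\overline{T}_{2,3}$, the image of $\rho$ is trivial, contradicting fixed point‑freeness.
\item $\operatorname{Fix}(\rho(z))=\operatorname{Fix}(\rho(\overline{T}))$: writing $z$ as an $n$‑th power of (a lift of) an order‑$n$ rotation for each $n$, and using that an orientation‑preserving homeomorphism of $\mathbf{R}$ has $\operatorname{Fix}(f^n)=\operatorname{Fix}(f)$, one gets $\operatorname{Fix}(\rho(z))=\operatorname{Fix}(\rho(\tilde c))$ for suitable torsion generators $\tilde c$ of $\overline{T}$, hence the equality.
\item $\rho|_{\overline{T}_{2,3}}$ has no global fixed point: such a point is fixed by $\rho(g)=\rho(z)$, hence lies in $\operatorname{Fix}(\rho(z))=\operatorname{Fix}(\rho(\overline{T}))\subseteq\operatorname{Fix}(\rho(M))=\varnothing$.
\item Therefore $\rho|_{\overline{T}_{2,3}}$ is a fixed point‑free action; it is not of type 1 ($\overline{T}_{2,3}$ is perfect) nor of type 3 (a proximal action has trivial centralizer, so the central $z_{2,3}$ would act trivially and $\rho|_{\overline{T}_{2,3}}$ would factor through the torsion simple group $T_{2,3}$), hence it is of type 2. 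Using that $T_{2,3}$ is uniformly perfect with one‑dimensional second bounded cohomology, the homogeneous quasimorphism $\widetilde{\rot}\circ\rho|_{\overline{T}_{2,3}}$ is a multiple of the standard translation quasimorphism (which sends $g\mapsto\alpha$, $z_{2,3}\mapsto1$); combined with the minimal type 2 model, in which $z_{2,3}$ acts as a nonzero integer translation and the semiconjugacy rescales $\widetilde{\rot}$ by a nonzero rational factor, this gives $\widetilde{\rot}(\rho(z_{2,3}))\in\mathbf{Q}\setminus\{0\}$ and $\widetilde{\rot}(\rho(g))=\alpha\cdot\widetilde{\rot}(\rho(z_{2,3}))\notin\mathbf{Q}$.
\item On the other hand $\widetilde{\rot}(\rho(z))\in\mathbf{Q}$: if $\operatorname{Fix}(\rho(\overline{T}))\neq\varnothing$ then $\rho(z)$ has a fixed point and $\widetilde{\rot}(\rho(z))=0$; if $\operatorname{Fix}(\rho(\overline{T}))=\varnothing$ then $\rho|_{\overline{T}}$ is fixed point‑free, hence of type 2 by the dichotomy in (d), and the same model/rescaling argument (now with $z$ central in $\overline{T}$) yields $\widetilde{\rot}(\rho(z))\in\mathbf{Q}$.
\item Since $\rho(z)=\rho(g)$, steps (d) and (e) contradict each other.
\end{enumerate}
Hence $M$ admits no fixed point‑free action of type 1 or 2, so every fixed point‑free action of $M$ on $\mathbf{R}$ is of type 3, and $M$ is a left orderable monster.

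The main obstacle is the type 2 elimination, and specifically making (d)--(e) rigorous: this requires the rigidity inputs that $\overline{T}$ and $\overline{T}_{2,3}$ have one‑dimensional spaces of homogeneous quasimorphisms and that their type 2 actions are, up to semiconjugacy, rescalings of the standard ones in which the central generator becomes an integer translation, together with the fact that $T$ and $T_{2,3}$ (being simple and containing torsion) have no nontrivial action on $\mathbf{R}$. The punchline is a clash of arithmetic: the divisibility and centrality of $z$ in $\overline{T}$ force $\widetilde{\rot}(\rho(z))$ to be rational, while the irrationality of the rotation number of $g$ in $\overline{T}_{2,3}$ forces $\widetilde{\rot}(\rho(g))$ to be irrational, yet these two numbers must coincide. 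The left orderability (faithfulness of the glued action) is a secondary technical point.
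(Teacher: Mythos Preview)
Your core strategy matches the paper's: rule out type~1 by perfection of $M$, and rule out type~2 by forcing the rotation/translation number of $\rho(z)=\rho(g)$ to be simultaneously rational (via the $\overline{T}$-factor) and irrational (via the $\overline{T}_{2,3}$-factor). The packaging, however, differs. For left orderability the paper simply cites the Bludov--Glass theorem that an amalgam of left orderable groups over a cyclic subgroup is left orderable, bypassing your explicit gluing and ping-pong. For the type~2 step, rather than working on the line with translation numbers, the paper passes to the circle $\mathbf{R}/\langle\tau\rangle$ and, for each factor $\overline{G}_i$, runs the chain ``minimalize $\to$ proximalize $\to$ identify with the standard action of $G_i$ (by uniqueness of the proximal action)'' to show that rationality of rotation numbers agrees with that in the standard action (Proposition~\ref{prop:rot:G}); this replaces your steps (b)--(e), your appeal to one-dimensionality of homogeneous quasimorphisms on $\overline{T}_{2,3}$, and your bookkeeping relating the global $\tau$ to the commuting translation of the local minimal model. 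The two routes encode the same underlying rigidity, but the paper's is shorter, avoids the sketchiest part of your argument (why the rescaling factor in (d) is a nonzero \emph{rational}), and generalizes verbatim to the class $\mathcal{G}$. Two small corrections: in (a), $\overline{T}_{2,3}/\langle\!\langle g\rangle\!\rangle$ is in fact trivial, since any non-central element normally generates (Lemma~\ref{lem:normalgeneration}); and the paper shows directly that \emph{both} factors act without global fixed points, whereas your (e) must still branch on whether $\operatorname{Fix}(\rho(\overline{T}))$ is empty.
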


A number of notable properties can be quickly established for this group. 
Firstly, the space of homogeneous quasimorphisms of $M$ is infinite-dimensional. In particular, $M$ is not uniformly perfect, and the second bounded cohomology of $M$ is infinite-dimensional.
Moreover, $M$ surjects onto Thompson's group $T$. In particular, there exists a minimal action of $M$ on the circle, and $M$ is not simple.
This showcases the difference with previous examples, which not only do not share these properties, but also were all more complicated to construct and harder to analyze.

\medskip

To state our more general construction, we define $\mathcal{G}$ to be the class of infinite groups $G$ of orientation-preserving homeomorphisms of the circle that satisfy:
\begin{enumerate}

\item The lift $\overline{G}$ of $G$ to the real line is perfect.

\item $G$ is simple.

\item $G$ has a unique proximal action on the circle, up to conjugacy.

\item $G$ has torsion.

\end{enumerate}

Groups in $\mathcal{G}$ include: Thompson's group $T$ \cite{cfp}, the Stein--Thompson groups $T_{n_1,\dots,n_k}$, whenever $n_2=n_1^{2a}-1$ for some $a$ \cite{stein,guelman} (for example $T_{2,3}$), the commutator subgroup of the Golden Ratio Thompson group $T_{\tau}$ \cite{Ttau}, and the piecewise projective group $S$ constructed by the second author \cite{S}. Some of these groups are known to have irrational rotation numbers, e.g., $T_\tau'$ \cite{spectrum}, and $T_{n_1,\dots,n_k}$ if some $n_i$ and $n_j$ are coprime \cite{liousse}. A peculiarity of $T$ is that the rotation numbers of all of its elements are rational \cite{GhysSergiescu}. We do not know whether $S$ contains an element with irrational rotation number.

The following is our main result (with Theorem~\ref{thm:main23} as a special case).

\begin{theorem}
\label{thm:main}
Let $G_1,G_2\in \mathcal{G}$ be groups such that $G_2$ contains an element with an irrational rotation number. Let $\overline{G}_i$ denote the lift of the standard action of $G_i$, let $z$ denote the generator of the center of $G_1$, and let $g \in \overline{G}_2$ denote the lift of an element with irrational rotation number. Then the following hold:
\begin{enumerate}
\item The group $M := \overline{G}_1*_{z = g}\overline{G}_2$ is a left orderable monster.

\item If $G_1,G_2$ are finitely presented (respectively of type $F_{\infty}$), then $M$ is also finitely presented (respectively of type $F_{\infty}$.)
\item The space of homogeneous quasimorphisms of $M$ is infinite-dimensional. In particular, $M$ is not uniformly perfect, and the second bounded cohomology of M is infinite-dimensional.

\item $M$ admits a unique proximal action on the circle (up to conjugacy), and this coincides with the unique proximal action of the quotient $G_1$.
In particular, $G_1$ is an isomorphism invariant of $M$.

\end{enumerate}

\end{theorem}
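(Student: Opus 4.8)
The plan is to establish the four items roughly in increasing order of difficulty, reusing the dynamical machinery for classifying actions of finitely generated left orderable groups on the line. For item (2), finite presentability and type $F_\infty$ are inherited by amalgamated products over a common subgroup as long as that subgroup is itself of the appropriate finiteness type. Here the amalgamated subgroup is infinite cyclic, generated by $z$ on one side and by $g$ on the other, and $\mathbf{Z}$ is of type $F_\infty$; so if $\overline{G}_1$ and $\overline{G}_2$ are of type $F_\infty$ (respectively finitely presented), then so is $M$ by the standard Bieri--Stallings style gluing results. One must first check that $\overline{G}_i$ inherits the finiteness type of $G_i$: since $\overline{G}_i$ is a central extension of $G_i$ by $\mathbf{Z}$, this is again a routine application of the fact that extensions of groups of type $F_\infty$ by groups of type $F_\infty$ are of type $F_\infty$.

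For item (3), the idea is to pull back quasimorphisms. The lift $\overline{G}_2$ contains the central element $g$, which is a lift of an element of irrational rotation number; the translation number $\tau$ is a homogeneous quasimorphism on $\overline{G}_2$ that restricts to an isomorphism $\langle g\rangle \to \mathbf{Z}$ after rescaling. More relevantly, since $G_2$ contains a hyperbolic-like element with irrational rotation number, one expects $\overline{G}_2$ to carry infinitely many linearly independent homogeneous quasimorphisms (for instance using the fact that $G_2$, acting proximally on the circle, is non-elementary and thus acylindrically hyperbolic-flavored, or simply by citing that these Thompson-like groups have infinite-dimensional $\mathrm{Q}/\mathrm{H}^1$). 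The key point is then an extension result for quasimorphisms on amalgamated products: a homogeneous quasimorphism on a free factor that vanishes on the amalgamated subgroup extends to the amalgam. So I would take an infinite-dimensional space of homogeneous quasimorphisms on $\overline{G}_2$, correct each by a multiple of $\tau$ so that it vanishes on $\langle g\rangle$ (a codimension-one condition, so infinite-dimensionality survives), and extend to $M$. The consequences — failure of uniform perfection and infinite-dimensionality of $\mathrm{H}^2_b(M;\mathbf{R})$ — follow from Bavard duality and the exact sequence relating quasimorphisms, $\mathrm{H}^1$, and $\mathrm{H}^2_b$.

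For item (4), existence of a proximal action on the circle: $M$ surjects onto $G_1$ (kill the copy of $\overline{G}_2$ appropriately, or rather: the center $z$ of $\overline{G}_1$ maps to $g$, and $\overline{G}_1/\langle z\rangle = G_1$, so one builds a surjection $M \twoheadrightarrow G_1$ by sending $\overline{G}_1 \to G_1$ and $\overline{G}_2 \to G_1$ via some compatible map — one must check compatibility on the amalgamated subgroup, sending both $z$ and $g$ to the image of $g$ in... wait, $g$ has infinite order image; the cleaner statement is that $M/\langle\langle z \rangle\rangle$ surjects onto $G_1$, or that $M$ surjects onto $G_1$ via $\overline{G}_1\to G_1$ combined with $\overline{G}_2\to \overline{G}_2/\langle g\rangle$, which need not be $G_1$). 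So the honest construction of the surjection to $G_1$ and the identification of its kernel needs care; once it is in hand, the unique proximal $G_1$-action on $S^1$ pulls back to a proximal $M$-action. Uniqueness up to conjugacy, and the fact that \emph{every} proximal action of $M$ factors through $G_1$, is the main obstacle: it requires showing that $\overline{G}_2$, and in particular the element $g$, must act with a fixed point (equivalently that $z$, being central, forces the $\overline{G}_1$-part to act as its unique proximal action while collapsing the $\overline{G}_2$-part), using property (3) of the class $\mathcal{G}$ — uniqueness of the proximal action of $G_i$ — together with the rigidity coming from $z$ having irrational rotation number being impossible in a proximal action of a group with a global structure forcing rational rotation numbers on central elements. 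Concretely, I expect the argument to run: in any proximal circle action of $M$, the central element $z = g$ has a well-defined rotation number; proximality of the ambient group forces this rotation number to be rational (centralizer of a proximal group is trivial-ish), in fact forces $z$ to act trivially; then $M/\langle\langle z\rangle\rangle$ acts, the $\overline{G}_2$-factor collapses because $g$ maps to the identity and $\overline{G}_2/\langle g\rangle$'s only proximal-compatible quotient is trivial, leaving exactly the $G_1$-action. The delicate step — and the one I'd flag as the crux — is ruling out the case where $z$ acts with an irrational rotation number without a global fixed point, which is where one exploits that a proximal action on $S^1$ has no nontrivial centralizer, so its center must act trivially. That centralizer-triviality of proximal circle actions is the technical heart I would need to invoke or prove.
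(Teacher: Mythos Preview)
Your proposal has three genuine gaps, one of them fatal.

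\textbf{Item (1) is entirely absent.} You announce that you will treat the four items ``roughly in increasing order of difficulty'' and then discuss (2), (3), (4) --- but never return to (1), which is the headline result. The paper's argument for (1) is the heart of the theorem: one shows $M$ is perfect (so no type~1 actions), and then rules out type~2 by contradiction. Assuming a minimal type~2 action $\rho$, one first proves that neither $\rho(\overline{G}_1)$ nor $\rho(\overline{G}_2)$ has a global fixed point (otherwise one gets a faithful action of the other factor on an interval, semiconjugate to its standard action by Lemma~\ref{lem:lineactions}, contradicting that $\rho(g)=\rho(z)$ would then fix that point). One then passes to the quotient circle $\mathbf{R}/\langle t\rangle$ and applies Proposition~\ref{prop:rot:G} to each factor separately: the $\overline{G}_1$-side forces $\rot(\phi(z))$ rational, the $\overline{G}_2$-side forces $\rot(\phi(g))$ irrational, contradicting $z=g$ in $M$. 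None of this dynamical work appears in your proposal.

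\textbf{Item (3): your approach likely fails.} You propose to pull back an infinite-dimensional space of homogeneous quasimorphisms from $\overline{G}_2$. But nothing in the definition of $\mathcal{G}$ guarantees this space is infinite-dimensional, and for the prototypical examples it is not: for Thompson's group $T$ one has $\HH^2_b(T;\mathbf{R})=0$, so the only homogeneous quasimorphism on $\overline{T}$ is (a multiple of) the translation number. After you ``correct by a multiple of $\tau$ to vanish on $\langle g\rangle$'' you could be left with nothing. The paper bypasses this entirely by citing the structural result that an amalgam $A*_C B$ has infinite-dimensional quasimorphism space whenever $C\neq B$ and $C$ has at least three double cosets in $A$; here $C=\langle z\rangle$ is cyclic and certainly not maximal in $\overline{G}_1$, so the hypothesis is immediate.

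\textbf{Item (4): the centralizer shortcut is a mistake.} You argue that in a proximal action the centralizer is trivial, hence ``the center must act trivially'', hence $z$ acts trivially. But $z$ is \emph{not} central in $M$: it is central in $\overline{G}_1$, while on the other side $g$ is just an arbitrary lift of an irrational-rotation element and has no reason to be central in $\overline{G}_2$. So triviality of the centralizer of $\rho(M)$ says nothing directly about $\rho(z)$. The paper instead argues in stages: first it shows $\rho(g)$ has a fixed point (else both factors act without fixed points, one proximalizes each, and the same rational/irrational rotation-number contradiction as in item~(1) fires); then that $\rho(\overline{G}_2)$ has a fixed point; then that $Fix(\rho(\overline{G}_2))=Fix(\rho(g))$ using Lemma~\ref{lem:lineactions} on each complementary interval; then that this fixed set is $\rho(\overline{G}_1)$-invariant (because $Fix(\rho(z))$ is), hence $\rho(M)$-invariant, hence all of $\mathbf{S}^1$ by minimality. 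Only then does $\rho(z)$ become trivial and the action factor through $G_1$. Your sketch gestures at pieces of this but the logical backbone --- and crucially the reason $z$ ends up trivial --- is not the one you give.

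Item (2) is fine and matches the paper.
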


Using the above, we will conclude the following.

\begin{corollary}
\label{cor:main}
There exist infinitely many isomorphism classes of finitely presentable (and type $F_{\infty}$) left orderable monsters.
\end{corollary}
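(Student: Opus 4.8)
The plan is to deduce Corollary~\ref{cor:main} directly from Theorem~\ref{thm:main}, using part (4) as the mechanism that distinguishes infinitely many isomorphism classes. First I would fix a single finitely presented (or type $F_\infty$) group $G_2 \in \mathcal{G}$ that contains an element with irrational rotation number --- for concreteness one may take $G_2 = T_{2,3}$, which lies in $\mathcal{G}$ by the discussion following the definition of $\mathcal{G}$, is of type $F_\infty$ by Stein's work, and contains an element of rotation number $\tfrac{\log 2}{\log 3}$ by \cite{liousse}. Then I would exhibit an infinite family $\{G_1^{(n)}\}_{n}$ of pairwise non-isomorphic groups in $\mathcal{G}$, each finitely presented (indeed of type $F_\infty$). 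For each $n$, Theorem~\ref{thm:main}(1) and (2) give that $M_n := \overline{G_1^{(n)}} *_{z = g} \overline{G}_2$ is a finitely presented (type $F_\infty$) left orderable monster.

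The separation into distinct isomorphism classes is then immediate from Theorem~\ref{thm:main}(4): since $G_1^{(n)}$ is an isomorphism invariant of $M_n$, if $M_n \cong M_m$ then $G_1^{(n)} \cong G_1^{(m)}$, hence $n = m$. So all that remains is to produce the family $\{G_1^{(n)}\}$. The natural candidate is the family of Stein--Thompson groups: by the cited criterion, $T_{n_1,\dots,n_k} \in \mathcal{G}$ whenever $n_2 = n_1^{2a} - 1$ for some $a$, and these groups are of type $F_\infty$ by Stein's results. One then needs infinitely many pairwise non-isomorphic such groups; this follows from standard invariants distinguishing Stein--Thompson groups (e.g.\ abelianizations, or the set of slopes/primes appearing, or the $K$-theoretic invariants of Stein), so that one can select a subsequence $G_1^{(n)} = T_{p_n, p_n^{2}-1}$ (or similar) with $G_1^{(n)} \not\cong G_1^{(m)}$ for $n \neq m$. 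Alternatively, one could mix in $T$, $T_\tau'$, and the group $S$ of \cite{S} to get a few more classes, but a single infinite family is cleaner.

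I expect the only genuine point requiring care --- and hence the "main obstacle", though it is minor --- is verifying that one genuinely has infinitely many pairwise non-isomorphic finitely presented members of $\mathcal{G}$ to serve as the $G_1$'s, together with the bookkeeping that each chosen $G_1$ is of type $F_\infty$ and that its center is infinite cyclic so that $z$ and the gluing make sense (this last is part of the setup of Theorem~\ref{thm:main}, so it is already handled once $G_1 \in \mathcal{G}$). Everything else is a formal consequence of Theorem~\ref{thm:main}: parts (1)--(2) supply the desired groups, and part (4) supplies the invariant that forces infinitely many isomorphism types. I would therefore present the proof as: fix $G_2$; invoke the classification of Stein--Thompson groups to get an infinite family of pairwise non-isomorphic finitely presented (type $F_\infty$) groups in $\mathcal{G}$; apply Theorem~\ref{thm:main}(1)--(2) to each to obtain finitely presented (type $F_\infty$) left orderable monsters $M_n$; and apply Theorem~\ref{thm:main}(4) to conclude $M_n \cong M_m \Rightarrow n = m$, which completes the proof.
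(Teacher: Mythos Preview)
Your proposal is correct and follows essentially the same route as the paper: apply Theorem~\ref{thm:main}(1)--(2) to an infinite family of pairwise non-isomorphic finitely presented (type $F_\infty$) groups $G_1 \in \mathcal{G}$, and use part~(4) to distinguish the resulting monsters. The paper chooses the family $T_{2,3,n_3,\dots,n_k}$ (verified to lie in $\mathcal{G}$ in Example~\ref{ex:T23}) and cites \cite{liousse} for their pairwise non-isomorphism; note that your suggested invariant ``abelianizations'' will not work here since these groups are simple.
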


\textbf{Acknowledgments.} The first author was supported by an ETH Z\"urich Doc.Mobility Fellowship. The third author was supported by grant \#635763 from the Simons Foundation.

\section{Actions on the line and on the circle}

We denote by $\Homeo^+(\mathbf{R})$ and $\Homeo^+(\mathbf{S}^1)$ the groups of orientation-preserving homeomorphisms of the real line and the circle, respectively. Given $f\in \Homeo^+(\mathbf{R})$, the support of $f$, denoted $supp(f)$, is the set of points that are moved by $f$.
Note that the support of a homeomorphism is always an open set.
Given $G \leq \Homeo^+(\mathbf{R})$ or $\Homeo^+(\mathbf{S}^1)$, we denote by $Fix(G)$ the set of global fixed points of $G$, which is closed. We say that an action $\rho$ of a group $G$ is \emph{fixed point-free} if $Fix(\rho(G)) = \varnothing$.

Recall that a group action on a topological space by homeomorphisms is \emph{minimal} if each orbit is dense.
We say that $\Lambda\subset \mathbf{R}$ (or $\Lambda\subset \mathbf{S}^1$) is \emph{exceptional} if $\Lambda$ is
perfect and totally disconnected. 
Let $G\leq \Homeo^+(\mathbf{M})$, where $\mathbf{M}\in \{\mathbf{R},\mathbf{S}^1\}$, and let $\Lambda\subset \mathbf{M}$ be an exceptional set. We say that $\Lambda$ is \emph{minimal} if it is $G$-invariant and the action of $G$ on $\Lambda$ is minimal.

The following are some fundamental results concerning actions of groups on the circle and the real line:

\begin{theorem}[{\cite[Theorem 2.1.1]{Navas}}]
\label{thm:circleAction}
If $\Gamma\leq \Homeo^+(\mathbf{S}^1)$ is a countable group, then precisely one of the following holds:
\begin{enumerate}

\item The action admits a finite orbit.

\item The action is minimal.

\item There exists a unique, minimal, exceptional subset $\Lambda\subset \mathbf{S}^1$ which is homeomorphic to the
Cantor set.

\end{enumerate}
\end{theorem}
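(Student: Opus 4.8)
The plan is to run the classical minimal-set trichotomy argument, which has three stages: produce a minimal nonempty closed $\Gamma$-invariant subset of $\mathbf{S}^1$, classify its topological type, and then check that the three alternatives are mutually exclusive and collectively exhaustive.

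\emph{Stage 1 (existence and classification of a minimal set).} Using compactness of $\mathbf{S}^1$, I would apply Zorn's lemma to the poset of nonempty closed $\Gamma$-invariant subsets ordered by reverse inclusion (a decreasing chain has nonempty intersection, which is again closed and invariant) to obtain a minimal such set $\Lambda$; by construction the $\Gamma$-action on $\Lambda$ is minimal. I would then classify $\Lambda$ topologically: the set of isolated points of $\Lambda$ is open in $\Lambda$ and $\Gamma$-invariant, so the perfect kernel is a closed invariant subset, hence $\varnothing$ (making $\Lambda$ discrete and compact, so finite) or all of $\Lambda$ (making $\Lambda$ perfect). In the perfect case the interior $U$ of $\Lambda$ in $\mathbf{S}^1$ is open and invariant, so $\Lambda\setminus U$ is closed and invariant, hence empty (forcing $U=\Lambda=\mathbf{S}^1$ by connectedness) or all of $\Lambda$ (forcing $U=\varnothing$). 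In the last case $\Lambda$ is a compact perfect nowhere dense set; it is totally disconnected since a nondegenerate connected subset of $\mathbf{S}^1$ contains an arc, so by Brouwer's characterization $\Lambda$ is homeomorphic to the Cantor set. Thus any minimal set is finite, all of $\mathbf{S}^1$, or a Cantor set.

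\emph{Stage 2 (the three cases).} If some minimal set is finite we are in case (1). If some minimal set equals $\mathbf{S}^1$, then $\mathbf{S}^1$ has no proper nonempty closed invariant subset, i.e.\ the action is minimal, case (2). The remaining possibility is that every minimal set is a Cantor set, and here the crux is the claim: if $\Lambda$ is a minimal Cantor set, then $\overline{\Gamma x}\supseteq\Lambda$ for every $x\in\mathbf{S}^1$. Granting it, there is no finite orbit (its closure would be finite yet contain the infinite set $\Lambda$), so (1) fails; $\mathbf{S}^1$ is not minimal since $\Lambda\subsetneq\mathbf{S}^1$ is closed and invariant, so (2) fails; and any minimal set $\Lambda'$ satisfies $\Lambda'=\overline{\Gamma x}\supseteq\Lambda$ for $x\in\Lambda'$, hence $\Lambda'=\Lambda$ — so the Cantor minimal set is unique and we are in case (3). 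Since Stage 1 guarantees at least one minimal set, exactly one alternative holds.

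\emph{Stage 3 (the claim, which I expect to be the main obstacle).} The delicate point is ruling out a second minimal set concealed in a complementary gap of $\Lambda$. I would argue: given $x\notin\Lambda$, it lies in a gap $(a,b)$ of the Cantor set $\Lambda$ with $a\in\Lambda$, and it suffices to show $a\in\overline{\Gamma x}$ (then $\Lambda=\overline{\Gamma a}\subseteq\overline{\Gamma x}$). For each $g\in\Gamma$, both $gx$ and $ga$ lie in the gap $g(a,b)$, so their distance is at most its length; since the gaps of $\Lambda$ have summable lengths, either the $\Gamma$-orbit of the gap $(a,b)$ is infinite — in which case I can choose $g_n$ with $|g_n(a,b)|\to 0$, pass to a subsequence with $g_na\to p\in\Lambda$, deduce $g_nx\to p$, and conclude $\Lambda=\overline{\Gamma p}\subseteq\overline{\Gamma x}$ — or this orbit is finite, in which case $\Gamma a$ is contained in the finite set of endpoints of those gaps, making $\Lambda=\overline{\Gamma a}$ finite, a contradiction. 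Apart from this claim, every step is routine point-set topology on the circle.
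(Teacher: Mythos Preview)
The paper does not give its own proof of this statement; it is quoted as \cite[Theorem 2.1.1]{Navas} and used as a black box. Your argument is correct and is precisely the classical minimal-set trichotomy that Navas presents: produce a minimal closed invariant set by Zorn/compactness, classify it as finite, the whole circle, or a Cantor set via the perfect-kernel and empty-interior dichotomies, and then prove uniqueness of the Cantor minimal set by the gap-collapsing argument. Two small remarks: you never use the countability hypothesis (indeed the result holds for arbitrary subgroups of $\Homeo^+(\mathbf{S}^1)$), and in Stage~2 you might state explicitly that the claim also rules out alternative~(3) when a finite orbit exists (if a Cantor minimal set $\Lambda$ existed, the claim would force every orbit closure to contain $\Lambda$, contradicting the finite orbit), so that mutual exclusivity of all three alternatives is fully spelled out.
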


For the case of the line, a similar result holds under the additional assumption of finite generation:

\begin{proposition}[{\cite[Proposition 2.1.12]{Navas}}]
\label{prop:lineAction}
Every fixed point-free action of a finitely generated group $G$ on $\mathbf{R}$ by orientation-preserving homeomorphisms admits a non-empty minimal invariant closed set. This closed set is either:
\begin{enumerate}

\item Discrete, in which case the group surjects onto the integers.

\item The whole real line, in which case the group action is minimal.

\item A unique minimal, exceptional set $\Lambda\subset \mathbf{R}$.

\end{enumerate}
\end{proposition}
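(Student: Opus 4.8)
The plan is to produce a non-empty minimal closed invariant set by Zorn's lemma, and then to read the trichotomy off from elementary point-set topology together with minimality. First I would record the basic consequence of fixed point-freeness: if $A\subseteq\mathbf{R}$ is non-empty, bounded above, and $G$-invariant, then $\sup A$ is a global fixed point, because each $g\in G$ is an increasing homeomorphism, so $g(\sup A)=\sup g(A)=\sup A$; hence no orbit — and more generally no non-empty closed invariant set — is bounded above or below, and in particular every non-empty proper closed invariant set has only bounded complementary intervals. Next comes the one step where finite generation is essential: fix a finite symmetric generating set $S$. Since each $s\in S$ is an increasing homeomorphism of $\mathbf{R}$, we have $s(R)\to+\infty$ and $s(-R)\to-\infty$ as $R\to+\infty$, so by finiteness of $S$ there is a single $R$ with $s([-R,R])\cap[-R,R]\neq\varnothing$ for every $s\in S$. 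Put $K=[-R,R]$. Then every translate $gK$ meets each of its Cayley-graph neighbors $gsK$, so $G\cdot K$ is connected; being an interval that contains an orbit, it is unbounded in both directions, whence $G\cdot K=\mathbf{R}$ — the action is cocompact. Now Zorn's lemma applies to the poset of non-empty closed invariant sets ordered by reverse inclusion: for a chain $\{F_\alpha\}$, each $F_\alpha$ meets $K$ (if $x\in F_\alpha$ and $x=gk$ with $k\in K$, then $k=g^{-1}x\in F_\alpha$), the sets $F_\alpha\cap K$ are non-empty, closed and nested in the compact set $K$, so $\bigcap_\alpha F_\alpha\supseteq\bigcap_\alpha(F_\alpha\cap K)\neq\varnothing$. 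This yields a minimal non-empty closed invariant set $\Lambda$.

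With $\Lambda$ in hand I would classify it, using that each $g\in G$ restricts to a homeomorphism of $\Lambda$. The set of limit points of $\Lambda$ is closed and $G$-invariant, so by minimality it is empty — in which case $\Lambda$ is discrete — or all of $\Lambda$ — in which case $\Lambda$ is perfect. If $\Lambda$ is discrete, minimality forces it to be a single orbit, hence a discrete closed subset of $\mathbf{R}$ unbounded in both directions and thus order-isomorphic to $\mathbf{Z}$; the action of $G$ on it is by order-preserving bijections, which gives a homomorphism $G\to\mathbf{Z}$ with non-trivial image (otherwise $\Lambda$ would consist of global fixed points), so $G$ surjects onto $\mathbf{Z}$. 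If $\Lambda$ is perfect, then its interior is open and $G$-invariant and its boundary is closed and $G$-invariant, so by minimality the boundary is empty or all of $\Lambda$: if empty, $\Lambda$ is clopen and non-empty, hence $\Lambda=\mathbf{R}$, and then there is no proper non-empty closed invariant set, so every orbit is dense and the action is minimal; if the boundary is all of $\Lambda$, the interior is empty, so $\Lambda$ contains no interval and is therefore totally disconnected and, being perfect, exceptional.

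It remains to prove uniqueness in the exceptional case. Two distinct minimal sets are disjoint, since their intersection would be a closed invariant set strictly smaller than each, so it suffices to show that an exceptional minimal set $\Lambda$ lies inside $\overline{G\cdot y}$ for every $y\in\mathbf{R}$. If $y$ lies in a gap $(a,b)$ of $\Lambda$ (with $a,b\in\Lambda$) and $p\in\Lambda$ is a two-sided limit point, choose $g_n$ with $g_na\to p$, which is possible since $\overline{G\cdot a}=\Lambda$; each $g_n$ carries $(a,b)$ onto a gap of $\Lambda$, and because distinct gaps are pairwise disjoint bounded intervals one gets $g_nb\to p$ as well (any subsequential limit other than $p$ would put a point of $\Lambda$ inside a gap), so $g_ny\to p$ and $p\in\overline{G\cdot y}$. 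Since such points $p$ are dense in $\Lambda$ and $\overline{G\cdot y}$ is closed, $\Lambda\subseteq\overline{G\cdot y}$, so $\Lambda$ is the unique minimal set. The main obstacle is the cocompactness argument underlying the existence of $\Lambda$; it is the only place finite generation is used, and the statement genuinely fails without it, whereas everything afterwards is bookkeeping with minimality.
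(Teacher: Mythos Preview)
The paper does not give its own proof of this proposition; it is quoted verbatim as \cite[Proposition 2.1.12]{Navas} and used as a black box. So there is nothing in the paper to compare your argument against.

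That said, your proof is correct and is essentially the standard one (and in particular matches the argument in Navas's book). The cocompactness step via a finite symmetric generating set is exactly where finite generation enters, your Zorn argument for existence is sound, and the trichotomy via the derived set and boundary is clean. Two very small points of presentation: in the uniqueness argument you silently assume $y\notin\Lambda$ when placing $y$ in a gap $(a,b)$ --- the case $y\in\Lambda$ is immediate from minimality, but you should say so; and you might note explicitly why two-sided limit points are dense in $\Lambda$ (gap endpoints are countable, while a perfect set is uncountable), since you use this density at the end. Neither is a genuine gap.
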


We are particularly interested in the third case of the previous proposition. The following is a fact concerning this case:

\begin{proposition}[{\cite[Sections 2.1.1 and 2.1.2]{Navas}}]
\label{prop:minimalization}
Let $\mathbf{M}\in \{\mathbf{R},\mathbf{S}^1\}$.
Let $G\leq \Homeo^+(\mathbf{M})$ be such that $G$ admits a unique, exceptional,
minimal set $\Lambda\subset \mathbf{M}$. Then there is a continuous surjective map $\phi:\mathbf{M}\to \mathbf{M}$, a group
$H\leq \Homeo^+(\mathbf{M})$ and a surjective homomorphism $\psi:G\to H$, such that:
\begin{enumerate}
\item The restriction of $\phi$ to $\Lambda$ is surjective, and every fiber has size at most $2$.
\item For each $g\in G$, $\phi\circ g=\psi(g)\circ \phi$.
\item The action of $H$ on $\mathbf{M}$ is minimal.
\end{enumerate}
\end{proposition}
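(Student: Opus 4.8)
The plan is to carry out the classical ``collapse the gaps of $\Lambda$'' construction. Write $U:=\mathbf{M}\setminus\Lambda$. Since $\Lambda$ is closed, $U$ is open, and since $\mathbf{M}$ is second countable and locally connected, $U=\bigsqcup_{i}G_i$ is a countable disjoint union of open intervals, the \emph{gaps} of $\Lambda$; write $G_i=(a_i,b_i)$, so that $a_i,b_i\in\Lambda$. Because $\Lambda$ is perfect, no point of $\Lambda$ can be an endpoint of two distinct gaps (it would then be isolated in $\Lambda$), so the closures $\overline{G_i}=[a_i,b_i]$ are pairwise disjoint and $\overline{G_i}\cap\Lambda=\{a_i,b_i\}$. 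In the case $\mathbf{M}=\mathbf{R}$ I would use that $\Lambda$ is unbounded on both sides — otherwise the finite endpoint of an unbounded gap would be a global fixed point of $G$ — so that every $\overline{G_i}$ is a bounded interval. Define $x\sim y$ iff $x=y$ or $x,y$ lie in a common $\overline{G_i}$; this is a $G$-invariant equivalence relation, since $G$ permutes the connected components of the $G$-invariant set $U$, and each $\sim$-class is either a single point or one of the $\overline{G_i}$.

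Next I would realize the quotient $\mathbf{M}/\!\sim$ concretely as $\mathbf{M}$. Choose a non-atomic Borel measure $\nu$ on $\mathbf{M}$ that is supported on $\Lambda$, assigns positive mass to every nonempty relatively open subset of $\Lambda$, and is finite on compact sets; normalize $\nu(\mathbf{S}^1)=1$ in the circle case, and in the line case arrange that $\nu$ has infinite mass near each of the two ends of $\Lambda$. Fixing a basepoint and an orientation, let $\phi(x)$ be the signed $\nu$-mass of the interval from the basepoint to $x$ (reduced mod $1$ in the circle case). Then $\phi\colon\mathbf{M}\to\mathbf{M}$ is continuous ($\nu$ is non-atomic), surjective (by the mass conditions on the ends), monotone, constant on each $\overline{G_i}$ (as $\nu(\overline{G_i})=0$), and strictly monotone across any two $\sim$-inequivalent points of $\Lambda$ (the open interval between such points meets $\Lambda$, hence has positive $\nu$-mass). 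Thus the fibers of $\phi$ are exactly the $\sim$-classes, so $\phi$ factors as the quotient map $\mathbf{M}\to\mathbf{M}/\!\sim$ followed by a continuous bijection onto $\mathbf{M}$, which is a homeomorphism (being a continuous monotone bijection of $\mathbf{S}^1$, resp.\ of $\mathbf{R}$). In particular $\phi$ is a quotient map.

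Now I would push the action forward. For $g\in G$ and $w\in\mathbf{M}$, choose $x$ with $\phi(x)=w$ and set $\psi(g)(w):=\phi(g(x))$; this is well defined because $\sim$ is $G$-invariant, it is a homeomorphism of $\mathbf{M}$ (with inverse $\psi(g^{-1})$, since $\phi$ is a quotient map) and orientation-preserving (since $g$ and $\phi$ are monotone), and $g\mapsto\psi(g)$ is a homomorphism. Put $H:=\psi(G)$, so $\psi$ surjects onto $H$, and $\phi\circ g=\psi(g)\circ\phi$ by construction; this gives conclusion (2). For (1): any $w$ equals $\phi(x)$, and if $x\notin\Lambda$ then $x\in G_i$ and $w=\phi(a_i)$ with $a_i\in\Lambda$, so $\phi|_\Lambda$ is surjective; and $\phi^{-1}(w)\cap\Lambda$ is either a single point or $\overline{G_i}\cap\Lambda=\{a_i,b_i\}$, hence of size at most $2$. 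For (3): $g(\Lambda)=\Lambda$, so $\phi|_\Lambda\colon\Lambda\to\mathbf{M}$ is a continuous $\psi$-equivariant surjection from the minimal $G$-system $\Lambda$; for $w=\phi(x)$ with $x\in\Lambda$ we obtain $\mathbf{M}=\phi(\Lambda)=\phi(\overline{G\cdot x})\subseteq\overline{\phi(G\cdot x)}=\overline{H\cdot w}$, so every $H$-orbit is dense, i.e., $H$ acts minimally.

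The main obstacle is the middle step: one must make sure that collapsing the gaps genuinely returns $\mathbf{R}$ (resp.\ $\mathbf{S}^1$) and not a half-line or a closed arc — this is precisely where the absence of unbounded gaps, hence the exclusion of global fixed points in the line case, is used — and one must check that $\phi$ is a genuine quotient map, so that the induced $H$-action is continuous. Everything else is soft point-set topology together with the standard fact that a continuous equivariant image of a minimal system is minimal.
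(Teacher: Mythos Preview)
The paper does not supply its own proof of this proposition; it simply cites Navas's book \cite[Sections 2.1.1 and 2.1.2]{Navas}. Your argument is precisely the standard ``collapse the gaps'' construction carried out there: build a monotone map by integrating a fully supported non-atomic measure on $\Lambda$, observe that its fibres are exactly the closed gaps, and push the action through the resulting quotient. So your approach is correct and matches the reference the paper defers to.

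One small point worth making explicit: when you assert that $\Lambda$ is unbounded on both sides (in the line case), you invoke the absence of global fixed points, but you should say why the hypothesis rules them out --- namely, a global fixed point would give a singleton minimal closed invariant set, contradicting the assumed uniqueness of $\Lambda$ among minimal sets. With that clarification the construction goes through, and your verification of (1)--(3) is clean; in particular, the minimality of $H$ via the equivariant surjection $\phi|_\Lambda$ from the minimal system $\Lambda$ is exactly the right way to see (3).
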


Using the above, we define the following process, which we call the \emph{minimalization} of an action. We start with the action of a finitely generated group $G$ on $\mathbf{R}$ (or $\mathbf{S}^1$) by orientation-preserving homeomorphisms.
We assume that this action is not minimal; in the case of $\mathbf{R}$ we assume that there is no discrete orbit, and in the case of $\mathbf{S}^1$ we assume that there is no finite orbit. Using the above, we obtain the (possibly non-faithful) action of $G$ on $\mathbf{R}$ (or $\mathbf{S}^1$) which is minimal. This new action is called the \emph{minimalization} of the original action, and is semiconjugate to the original. If the original action was minimal to begin with, then this is the same as the original action.

For actions on the circle, a further property will be relevant for us. An action of a group $G$ on $\mathbf{S}^1$ is said to be \emph{proximal} if for every non-empty open arc $J \subset \mathbf{S}^1$ and every proper arc $I \subset \mathbf{S}^1$ there exists $f \in G$ such that $I \cdot f \subset J$. We warn the reader that in the literature this property is referred to by various other names, such as strongly proximal, extremely proximal, strongly expansive, or compact-open transitive (some of these properties have well-established and distinct meanings in topological dynamics, that happen to coincide for the case of actions on the circle). In this article, we refer to it as proximal.

\begin{theorem}[Margulis \cite{margulis}, see also {\cite[Section 5.2]{ghys:book}}]
\label{thm:prox}

Let $G$ act faithfully and minimally on the circle. Suppose that $G$ is non-abelian. Then:
\begin{enumerate}
    \item If the action is proximal, the centralizer of $G$ in $\Homeo^+(\mathbf{S}^1)$ is trivial.
    \item Otherwise, the centralizer of $G$ in $\Homeo^+(\mathbf{S}^1)$ is a finite cyclic group $\langle \theta \rangle$, and the induced action of $G$ on $\mathbf{S}^1 / \langle \theta \rangle \cong \mathbf{S}^1$ is proximal.
\end{enumerate}
\end{theorem}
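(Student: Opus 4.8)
The plan is to analyze the centralizer $Z$ of $G$ in $\Homeo^+(\mathbf{S}^1)$, and everything rests on one elementary observation: every non-trivial element of $Z$ acts without fixed points. Indeed, for $\theta\in Z$ the set $\operatorname{Fix}(\theta)$ is closed and, since $\theta$ commutes with $G$, it is $G$-invariant; by minimality it is therefore empty or all of $\mathbf{S}^1$, the latter forcing $\theta=\mathrm{id}$. From this I would next deduce that a non-trivial $\theta\in Z$ is topologically conjugate to a non-trivial rotation: such a $\theta$ has non-zero rotation number, and if $\rot(\theta)=p/q\neq 0$ is rational then $\theta^q$ has a fixed point, hence $\theta^q=\mathrm{id}$ (as $\theta^q\in Z$ too), so $\theta$ has finite order and is conjugate to a rotation; while if $\rot(\theta)$ is irrational then $\theta$ has no periodic orbit, so Theorem~\ref{thm:circleAction} applied to $\langle\theta\rangle$ provides a unique minimal set for $\theta$, which is $G$-invariant (as $G$ commutes with $\theta$) hence equals $\mathbf{S}^1$ by minimality of $G$; then $\theta$ acts minimally and Poincar\'e's semiconjugacy to $R_{\rot(\theta)}$ is injective, i.e.\ a conjugacy.

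For part (1) I would argue by contradiction. If the action is proximal and $\theta\in Z\setminus\{\mathrm{id}\}$, then after conjugating the whole picture we may take $\theta=R_\alpha$ to be a non-trivial rotation, so $G$ commutes with $R_\alpha$. Choose an open arc $J$ short enough that $J\cap R_\alpha(J)=\varnothing$ --- possible because $R_\alpha$ displaces every point by the same positive amount --- and a proper arc $I$ long enough that $I\cap R_\alpha(I)\neq\varnothing$. By proximality there is $g\in G$ with $g(I)\subset J$, and since $g$ commutes with $R_\alpha$,
\[
g\bigl(I\cap R_\alpha(I)\bigr)=g(I)\cap R_\alpha\bigl(g(I)\bigr)\subseteq J\cap R_\alpha(J)=\varnothing,
\]
which is absurd since $g$ is a homeomorphism. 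Hence the centralizer is trivial whenever the action is proximal.

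For part (2), assume the action is not proximal. Since $G$ is faithful and non-abelian it preserves no probability measure on $\mathbf{S}^1$: a minimal measure-preserving circle action is conjugate to an action by rotations, forcing the faithful group $G$ to be abelian, contrary to hypothesis. By the structure theory of minimal circle actions (Ghys, Margulis), the action is then a $k$-fold cover, for some $k\geq 1$, of an essentially unique minimal proximal action of a quotient $\overline{G}$ of $G$, with $k=1$ exactly when the original action is already proximal --- so here $k\geq 2$. Minimality of the $G$-action forces the semiconjugacy $q\colon\mathbf{S}^1\to\mathbf{S}^1$ onto the proximal model to be a $k$-fold covering map: $q$ collapses no arc, because the union of all collapsed arcs is open and $G$-invariant, hence empty or everything by minimality, and it cannot be everything. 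Therefore the deck group of $q$ is cyclic of order $k$, generated by some $\theta$ that we may take to have $\rot(\theta)=1/k$. Since $G$ normalizes this deck group and orientation-preserving conjugation preserves rotation numbers, $G$ must in fact centralize $\theta$; thus $\theta\in Z$ is non-trivial, $\langle\theta\rangle\cong\mathbf{Z}/k$ is finite cyclic, and $\mathbf{S}^1/\langle\theta\rangle\cong\mathbf{S}^1$ carries the induced action of $G$, namely the proximal action of $\overline{G}$. To see that $Z$ is exactly $\langle\theta\rangle$, I would take any $\eta\in Z$: it commutes with $G$, hence permutes the fibers of $q$ (which are determined by the $G$-action alone), so it descends to a homeomorphism of $\mathbf{S}^1/\langle\theta\rangle$ commuting with $\overline{G}$; since $\overline{G}$ acts faithfully, minimally and proximally, part (1) forces this descended homeomorphism to be the identity, i.e.\ $\eta$ fixes every fiber of $q$, i.e.\ $\eta\in\langle\theta\rangle$.

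The elementary observation and the whole of part (1) are short, and the reduction of part (2) to part (1) is also short once the proximal model is at hand. The main obstacle --- essentially the whole content of the theorem --- is the structure theory invoked at the start of part (2): the existence of the unique minimal proximal model and of the cyclic cover for a non-measure-preserving minimal circle action. I would either cite Ghys's account directly or, for a self-contained treatment, build this model from the $G$-action on the space of probability measures on $\mathbf{S}^1$, whose unique minimal closed invariant subset is a circle on which $G$ acts minimally and proximally; pinning down the collapsing map $q$ and its deck group there is where the genuine work lies.
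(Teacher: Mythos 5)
The paper does not prove this statement at all---it is quoted from Margulis (via Ghys), so there is no internal proof to compare against; your proposal has to stand on its own. Your preliminary observation (non-trivial centralizing elements act freely, hence are conjugate to rotations) and your proof of part (1) are complete and correct: the arc-counting argument $g(I\cap R_\alpha(I))\subseteq J\cap R_\alpha(J)=\varnothing$ is a clean, self-contained way to see that proximality kills the centralizer, and it does not even use minimality or non-abelianness. For part (2), however, you have correctly located but not supplied the actual content of the theorem: the existence of the minimal proximal quotient model and of the finite cyclic covering $q$ \emph{is} Margulis's theorem, and replacing it by ``by the structure theory (Ghys, Margulis)'' makes the argument circular as a proof of this statement (it is fine as a reduction, and it is what the paper itself does by citing the result). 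The sketch via the action on probability measures is the right place to look, but as written it is a pointer, not a proof.

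One further assertion in part (2) deserves justification: that an arbitrary $\eta\in Z$ ``permutes the fibers of $q$, which are determined by the $G$-action alone.'' This is true but not free; the fibers are characterized dynamically (as maximal orbits of non-contractible arcs), and that characterization is again part of the structure theory. A cleaner route using only what you have already established: since every non-trivial element of $Z$ acts freely, H\"older's theorem for circle actions shows $Z$ is abelian and the rotation number is an injective homomorphism $Z\to\mathbf{R}/\mathbf{Z}$; in particular $\eta$ commutes with $\theta$, hence descends to $\mathbf{S}^1/\langle\theta\rangle$, where it centralizes the proximal quotient action, so by your part (1) the descended map is the identity and $\eta\in\langle\theta\rangle$. (The same H\"older argument, combined with your observation that an irrational-rotation-number element of $Z$ would force $G$ into the rotation group, also shows directly that $Z$ is torsion and cyclic.) So: part (1) is proved; part (2) is a correct reduction to a theorem you would have to cite or build, exactly as the paper does.
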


Now we can push our minimalization process one step further. Given a minimal action of $G$ on the circle such that the image of $G$ in $\Homeo^+(\mathbf{S}^1)$ is non-abelian, we obtain a proximal action of $G$ on the circle, which we call the \emph{proximalization} of the original action. If the original action was proximal to begin with, then this is the same as the original action. \\

Proximal actions on the circle are especially relevant for type $2$ actions on the line:

\begin{proposition}[{\cite[Theorem 1]{malyutin}}]
\label{prop:type2:prox}

Let $G$ be a group, and let $\rho$ be a minimal type $2$ action on $\mathbf{R}$ by orientation-preserving homeomorphisms. Then there exists a fixed point-free homeomorphism $t \in \Homeo^+(\mathbf{R})$ which commutes with $\rho(G)$, such that the induced action of $G$ on $\mathbf{R}/\langle t \rangle \cong \mathbf{S}^1$ is proximal.
\end{proposition}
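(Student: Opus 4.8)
The plan is to promote the type‑2 semiconjugacy to an honest conjugacy with a translation‑commuting minimal action, pass to the circle, and then invoke Margulis's Theorem~\ref{thm:prox}. By the definition of type 2, $\rho$ is semiconjugate to a minimal action $\rho'$ of $G$ on $\mathbf{R}$ commuting with the integer translation $T\colon x\mapsto x+1$. A semiconjugacy between two minimal actions of a group on $\mathbf{R}$ by orientation‑preserving homeomorphisms is automatically a conjugacy: if the intertwining monotone surjection collapsed a nondegenerate interval, its $G$‑orbit would be a dense open $G$‑invariant set whose complement is a proper closed invariant set, which is impossible by minimality, so the map would be locally constant, hence constant, contradicting surjectivity. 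Thus there is $h\in\Homeo^+(\mathbf{R})$ with $h\,\rho(g)\,h^{-1}=\rho'(g)$ for all $g$, and it suffices to produce a fixed point‑free $t'\in\Homeo^+(\mathbf{R})$ commuting with $\rho'(G)$ such that the induced action of $G$ on $\mathbf{R}/\langle t'\rangle\cong\mathbf{S}^1$ is proximal; then $t:=h^{-1}t'h$ does the job.

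Let $\bar\rho$ denote the action of $G$ on $\mathbf{S}^1=\mathbf{R}/\langle T\rangle$ induced by $\rho'$, which is minimal because $\rho'$ is minimal and commutes with $T$. I would first check that the image $\bar\rho(G)\leq\Homeo^+(\mathbf{S}^1)$ is non‑abelian: otherwise, being a minimal abelian group of circle homeomorphisms, it is conjugate to a group of rotations and hence preserves the Lebesgue probability measure on $\mathbf{S}^1$; pulling this measure back along the covering $\mathbf{R}\to\mathbf{S}^1$ yields a $\rho'(G)$‑invariant Radon measure on $\mathbf{R}$, contradicting the hypothesis that $\rho$ (equivalently $\rho'$) is of type 2. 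Now apply Theorem~\ref{thm:prox} to the faithful, minimal, non‑abelian subgroup $\bar\rho(G)$. In the first case $\bar\rho$ is already proximal and $t'=T$ works. In the second case the centralizer of $\bar\rho(G)$ in $\Homeo^+(\mathbf{S}^1)$ is a finite cyclic group $\langle\theta\rangle$, and $\bar\rho$ descends to a proximal action of $G$ on $\mathbf{S}^1/\langle\theta\rangle\cong\mathbf{S}^1$.

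It then remains to realize this last quotient as $\mathbf{R}/\langle t'\rangle$. Since $\theta$ commutes with the minimal action $\bar\rho(G)$, it acts freely on $\mathbf{S}^1$ — a fixed point of $\theta$ would have a dense $\bar\rho(G)$‑orbit consisting of fixed points of $\theta$, forcing $\theta=\mathrm{id}$ — so $\theta$ has finite order $n$ and admits a lift $\tilde\theta\in\Homeo^+(\mathbf{R})$ with $\tilde\theta^{\,n}=T$ (for instance by conjugating the lift $x\mapsto x+\tfrac1n$ of a rotation, using that the conjugator commutes with $T$). This $\tilde\theta$ is fixed point‑free because its $n$‑th power is, and it commutes with $\rho'(G)$: for each $g$ the homeomorphism $\rho'(g)\,\tilde\theta\,\rho'(g)^{-1}$ is a lift of $\bar\rho(g)\,\theta\,\bar\rho(g)^{-1}=\theta$, hence equals $T^{j(g)}\tilde\theta$ for some $j(g)\in\mathbf{Z}$; raising to the $n$‑th power and using that $T$ is central gives $T=T^{\,nj(g)+1}$, so $j(g)=0$. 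Since $\langle T\rangle\trianglelefteq\langle\tilde\theta\rangle$ with quotient the copy of $\mathbf{Z}/n$ acting on $\mathbf{S}^1$ as $\langle\theta\rangle$, we obtain $\mathbf{R}/\langle\tilde\theta\rangle\cong\mathbf{S}^1/\langle\theta\rangle\cong\mathbf{S}^1$ carrying the proximal $G$‑action, and $t':=\tilde\theta$ works.

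The genuinely substantive input here is Margulis's theorem, which is already available to us as Theorem~\ref{thm:prox}; everything else is bookkeeping about lifts and covers. The one point deserving care — and what I would flag as the main potential pitfall — is that $\bar\rho$ need not be faithful, so one must apply Theorem~\ref{thm:prox} to the subgroup $\bar\rho(G)$ itself rather than to $G$; both the centralizer computation and the lifting argument above involve only this image, so faithfulness of $\bar\rho$ is never needed, but it is easy to overlook.
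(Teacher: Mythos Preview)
The paper does not supply its own proof of this proposition: it is quoted as \cite[Theorem 1]{malyutin} and used as a black box. So there is nothing to compare against on the paper's side.

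Your argument is correct and essentially self-contained modulo Theorem~\ref{thm:prox}. The three points worth recording as genuinely checked are: (i) a semiconjugacy between two minimal actions on $\mathbf{R}$ is a conjugacy (your collapsed-interval argument is fine; minimality of the source is what is used); (ii) the induced circle action has non-abelian image, since an abelian minimal subgroup of $\Homeo^+(\mathbf{S}^1)$ preserves a fully supported atomless probability measure and hence is conjugate to rotations, producing an invariant Radon measure upstairs; and (iii) the finite-order centralizing element $\theta$ lifts to $\tilde\theta$ with $\tilde\theta^{\,n}=T$, and your computation $(T^{j(g)}\tilde\theta)^n=T^{nj(g)+1}$ forcing $j(g)=0$ is exactly the right way to see that $\tilde\theta$ commutes with $\rho'(G)$. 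Your closing caveat about applying Theorem~\ref{thm:prox} to the image $\bar\rho(G)$ rather than to $G$ is well taken and handled correctly.

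In short: the proposal is a valid proof where the paper offers none, and the route via Margulis's theorem plus bookkeeping on lifts is the natural one.
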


Finally, we prove a basic fact about rotation numbers of homeomorphisms of the circle, which will be the crucial tool for our proof of Theorem \ref{thm:main}. Let $g \in \Homeo^+(\mathbf{S}^1)$, and let $\overline{g}$ be a lift of $g$ to the real line. Then the quantity
$$\rot(\overline{g}) := \lim\limits_{n \to \infty} \frac{\overline{g}^n(x)}{n} \in \mathbf{R}$$
is independent of the choice of $x \in \mathbf{R}$, and moreover depends on the choice of lift only up to integers. In particular, the \emph{rotation number} $\rot(g) \in \mathbf{R}/\mathbf{Z}$ is well-defined.

\begin{proposition}
\label{prop:rot:general}

Let $\rho : G \to \Homeo^+(\mathbf{S}^1)$ be a group action on the circle. Let $\rho_m$ be the minimalization of $\rho$, and suppose that $\rho_m(G)$ is non-abelian, so that there exists a proximalization $\rho_p$. Then there exists an integer $n \geq 1$ such that for all $g \in G$ we have $\rot(\rho_p(g)) = n \rot(\rho(g))$.
In particular, the rationality or irrationality of $\rot(\rho(g))$ is preserved under proximalization.
\end{proposition}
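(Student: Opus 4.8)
The plan is to follow the rotation number through the two steps of the minimalization--proximalization process. The key classical input is that the rotation number is invariant under semiconjugacy, and the one genuinely new point is to compute how it changes under the finite cyclic quotient supplied by Margulis's theorem. First I would dispose of the minimalization step. Since $\rho_m$ is semiconjugate to $\rho$, there is a monotone degree-one continuous surjection $\phi\colon\mathbf{S}^1\to\mathbf{S}^1$ with $\phi\circ\rho(g)=\rho_m(g)\circ\phi$ for all $g$. Lifting $\phi$ to a non-decreasing $\widetilde\phi\colon\mathbf{R}\to\mathbf{R}$ with $\widetilde\phi(x+1)=\widetilde\phi(x)+1$ and choosing compatible lifts of $\rho(g)$ and $\rho_m(g)$, one obtains $\widetilde\phi\circ\widetilde{\rho(g)}=\widetilde{\rho_m(g)}\circ\widetilde\phi$; iterating and using that $|\widetilde\phi(y)-y|$ is bounded gives $\rot(\rho_m(g))=\rot(\rho(g))$ for every $g$. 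So I may assume from now on that $\rho$ itself is the minimal action, with non-abelian image $H:=\rho(G)\le\Homeo^+(\mathbf{S}^1)$.

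Now I would apply Theorem~\ref{thm:prox} to $H$. If the action of $H$ is already proximal, then $\rho_p=\rho$ and $n=1$ works. Otherwise the centralizer of $H$ is a finite cyclic group $\langle\theta\rangle$ of order $n\ge 2$; since a non-trivial finite-order orientation-preserving homeomorphism of the circle has no fixed point, $\langle\theta\rangle$ acts freely, so the quotient map $q\colon\mathbf{S}^1\to\mathbf{S}^1/\langle\theta\rangle\cong\mathbf{S}^1$ is an orientation-preserving $n$-fold covering and $\rho_p$ is the induced (proximal) action on the quotient. Fix a lift $Q\colon\mathbf{R}\to\mathbf{R}$ of $q$; it satisfies $Q(x+1)=Q(x)+n$, so $|Q(y)-ny|$ is bounded. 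For $g\in G$ write $h=\rho(g)$ and $\overline h=\rho_p(g)$. As $h$ commutes with $\theta$ we have $q\circ h=\overline h\circ q$, and with the standard normalization of the lifts $\widetilde h,\widetilde{\overline h}$ this lifts to an honest equality $Q\circ\widetilde h=\widetilde{\overline h}\circ Q$ (after absorbing an integer constant into the choice of $\widetilde{\overline h}$). Iterating gives $\widetilde{\overline h}^k(Q(0))=Q(\widetilde h^k(0))$, whence
$$\rot(\widetilde{\overline h})=\lim_{k\to\infty}\frac{Q(\widetilde h^k(0))}{k}=\lim_{k\to\infty}\frac{n\,\widetilde h^k(0)+O(1)}{k}=n\,\rot(\widetilde h),$$
using the bounds $|Q(y)-ny|=O(1)$ and $|\widetilde h^k(0)-k\rot(\widetilde h)|<1$. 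Reducing modulo $\mathbf{Z}$ gives $\rot(\rho_p(g))=n\,\rot(\rho(g))$ in $\mathbf{R}/\mathbf{Z}$, as required.

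For the final assertion: multiplication by the positive integer $n$ on $\mathbf{R}/\mathbf{Z}$ sends torsion elements to torsion elements, and conversely if $n\alpha$ is torsion then so is $\alpha$; since the rational elements of $\mathbf{R}/\mathbf{Z}$ are exactly the torsion ones, $\rot(\rho(g))$ is rational if and only if $\rot(\rho_p(g))=n\,\rot(\rho(g))$ is. I expect the only delicate part to be the bookkeeping with lifts and covering maps --- fixing normalizations so that the semiconjugacy relation and the covering relation $q\circ h=\overline h\circ q$ lift to genuine equalities of maps $\mathbf{R}\to\mathbf{R}$ rather than equalities up to an integer, and verifying that $\langle\theta\rangle$ acts freely so that $q$ is indeed a covering. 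Once those are set up, the two elementary estimates on $Q$ and on $\widetilde h^k(0)$ do the rest.
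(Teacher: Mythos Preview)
Your proof is correct and follows essentially the same two-step strategy as the paper: semiconjugacy invariance of $\rot$ handles the minimalization, and the finite cyclic quotient from Theorem~\ref{thm:prox} handles the proximalization. The only cosmetic difference is that the paper first conjugates $\theta$ to an actual rotation by $1/n$ (citing \cite[Proposition~4.1]{ghys:book}) so the covering map becomes $x\mapsto nx$, whereas you work directly with a general $n$-fold cover via the estimate $|Q(y)-ny|=O(1)$; both arrive at the same formula.
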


\begin{proof}
The rotation number of a circle homeomorphism is a semiconjugacy invariant \cite[Section 5.1]{ghys:book}. In particular, if $\rho_m$ is the minimalization of $\rho$, then for all $g \in G$ it holds that $\rot(\rho_m(g)) = \rot(\rho(g))$.

Now by Proposition \ref{thm:prox}, since $\rho_m(G)$ is non-abelian there exists a (possibly trivial) finite-order homeomorphism $\theta$ that centralizes $\rho_m(G)$ and such that the proximalization $\rho_p$ is the induced action on the quotient $\mathbf{S}^1 / \langle \theta \rangle$. Let $n \geq 1$ be the order of $\theta$. By \cite[Proposition 4.1]{ghys:book}, up to conjugacy and changing the generator of $\langle \theta \rangle$, we may assume that $\theta$ is rotation by $1/n$. It then follows from the definition of rotation numbers that $\rot(\rho_m(g)) = n \rot(\rho_p(g))$, and we conclude.
\end{proof}

\section{Groups in $\mathcal{G}$ and their lifts}\label{sec:properties}
In this section we state and prove various results about the groups $G\in \mathcal{G}$ that play a crucial role in our general construction.
Given a group $G \in \mathcal{G}$, recall that $\overline{G}$ denotes the \emph{lift} of $G$ to the real line, that is the group of all homeomorphisms of $\mathbf{R}$ that commute with integer translations and induce elements of $G$ on the quotient $\mathbf{R}/\mathbf{Z}=\mathbf{S}^1$. We have a central extension
$$1 \to \mathbf{Z} \to \overline{G} \to G \to 1.$$

\begin{lemma}
\label{lem:normalgeneration}
Let $G\in \mathcal{G}$ and let $f\in \overline{G}$ be an element that does not lie in the center. Then $f$ normally generates $\overline{G}$.
\end{lemma}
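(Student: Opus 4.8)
The plan is to work inside the central extension $1 \to \mathbf{Z} \to \overline{G} \to G \to 1$, writing $\pi \colon \overline{G} \to G$ for the quotient map and $\langle z\rangle = \ker\pi \cong \mathbf{Z}$ for the (central) subgroup of integer translations, and to use exactly two of the defining properties of the class $\mathcal{G}$: simplicity of $G$ downstairs and perfectness of $\overline{G}$ upstairs. Let $N := \langle\langle f\rangle\rangle$ be the normal closure of $f$ in $\overline{G}$; the goal is to prove $N = \overline{G}$.

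First I would note that $\langle z\rangle$ is central in $\overline{G}$ (elements of $\overline{G}$ commute with integer translations by definition) and equals $\ker\pi$, so the hypothesis $f \notin Z(\overline{G})$ forces $\pi(f) \neq 1$ in $G$: otherwise $f \in \ker\pi = \langle z\rangle \subseteq Z(\overline{G})$. Then $\pi(N)$ is a normal subgroup of $G$ containing the nontrivial element $\pi(f)$, so simplicity of $G$ gives $\pi(N) = G$. Consequently $\overline{G} = N\langle z\rangle$, and therefore $\overline{G}/N \cong \langle z\rangle/(N \cap \langle z\rangle)$ is a quotient of $\langle z\rangle \cong \mathbf{Z}$; in particular it is abelian (in fact cyclic).

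To finish, since $\overline{G}$ is perfect, every quotient of it is perfect, so $\overline{G}/N$ is an abelian perfect group, hence trivial. Thus $N = \overline{G}$, i.e.\ $f$ normally generates $\overline{G}$.

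I do not expect a real obstacle here: the argument is essentially a one-line observation that ``$G$ simple downstairs $+$ $\overline{G}$ perfect upstairs'' excludes every proper normal subgroup of $\overline{G}$ that surjects onto $G$. The only minor points to verify carefully are that $\ker\pi = \langle z\rangle$ is genuinely inside the center (immediate) and, if one wants $Z(\overline{G}) = \langle z\rangle$ exactly, that $G$ is centerless, which holds because an infinite simple group is nonabelian so $\pi(Z(\overline{G})) \subseteq Z(G) = 1$. It is worth remarking that the torsion of $G$ and the uniqueness of its proximal action — properties (3) and (4) in the definition of $\mathcal{G}$ — are not needed for this lemma.
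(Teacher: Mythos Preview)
Your proof is correct and follows essentially the same approach as the paper: use simplicity of $G$ to get $\pi(N)=G$, hence $\overline{G}=N\langle z\rangle$ so $\overline{G}/N$ is abelian, and then perfection of $\overline{G}$ forces $N=\overline{G}$. The extra remarks you make (identifying $\ker\pi$ with the center, noting that properties (3) and (4) of $\mathcal{G}$ are unused) are accurate but not needed for the argument.
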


\begin{proof}
Let $N=\langle\!\langle f\rangle\!\rangle$ and let $Z=Z(\overline{G})\cong\mathbf{Z}$. The image of $N$ under the quotient $\overline{G}\to G$ is a normal subgroup of $G$, and is non-trivial since $N$ is not contained in $Z$. Since $G$ is simple, this image must equal $G$. This implies $\overline{G}=NZ$, so $\overline{G}/N\cong Z/(N\cap Z)$ is abelian. Now the fact that $\overline{G}$ is perfect implies $N=\overline{G}$.
\end{proof}

\begin{lemma}
\label{lem:TbarQuotients}
For $G\in \mathcal{G}$, every proper normal subgroup of $\overline{G}$ is central. Hence $\overline{G}$ has no proper non-trivial torsion-free quotients, and no non-trivial finite quotients.
\end{lemma}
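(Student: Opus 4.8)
The plan is to deduce everything from Lemma \ref{lem:normalgeneration}. First I would let $N \trianglelefteq \overline{G}$ be a proper normal subgroup and argue that $N$ must be central. Indeed, if $N$ were not contained in $Z = Z(\overline{G}) \cong \mathbf{Z}$, then $N$ would contain an element $f$ outside the center, and by Lemma \ref{lem:normalgeneration} that element normally generates $\overline{G}$; since $N$ is normal and contains $f$, this forces $N = \overline{G}$, contradicting properness. Hence every proper normal subgroup of $\overline{G}$ is contained in $Z$, i.e.\ is central.

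Next I would extract the two consequences. For the statement about torsion-free quotients: suppose $Q = \overline{G}/N$ is a non-trivial torsion-free quotient, so $N$ is a proper normal subgroup, hence central by the first part, so $N \leq Z \cong \mathbf{Z}$. Thus $N$ is either trivial or of finite index in $Z$. If $N$ is trivial then $Q \cong \overline{G}$, which has the non-trivial torsion element coming from the torsion of $G$ (property (4) in the definition of $\mathcal{G}$) — more precisely, a preimage in $\overline{G}$ of a torsion element of $G$ need not itself be torsion, so here I should instead observe that $\overline{G}$ is \emph{not} torsion-free because... actually the cleaner route is: $\overline{G}$ is perfect (property (1)), and a perfect group is never a non-trivial torsion-free group unless... no. Let me restructure: if $N \leq Z$ is a proper normal subgroup then $Q = \overline{G}/N$ surjects onto $\overline{G}/Z \cong G$, which has torsion by property (4), so $Q$ has torsion; and if $N = Z$ itself, $Q \cong G$ still has torsion. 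In all cases a proper quotient of $\overline{G}$ has torsion, so $\overline{G}$ has no non-trivial torsion-free quotient. (The case $N = \overline{G}$ gives the trivial quotient, which is vacuously torsion-free, hence the word ``non-trivial''.)

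Finally, for finite quotients: if $Q = \overline{G}/N$ is finite and non-trivial, then $N$ is a proper normal subgroup, hence central, hence $N \leq Z \cong \mathbf{Z}$; but then $Q$ surjects onto $\overline{G}/Z \cong G$, which is infinite (every group in $\mathcal{G}$ is infinite by definition), a contradiction. Alternatively and more directly: $\overline{G}$ is perfect by property (1), so it has no non-trivial \emph{abelian} quotients, and in particular no non-trivial finite abelian quotients; combined with the fact that any non-trivial finite quotient would have to be a quotient of $\overline{G}/N$ with $N$ central, hence would surject onto the infinite simple group $G$, we again reach a contradiction. I expect no real obstacle here; the only point requiring care is keeping track of which quotient one is taking (by $N$ versus the induced map to $G = \overline{G}/Z$) and correctly invoking property (4) so that torsion in $G$ transfers to torsion in the quotient $Q$, using that $Q \twoheadrightarrow G$.
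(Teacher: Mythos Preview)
Your argument for the first claim (proper normal subgroups are central) and for the absence of non-trivial finite quotients is correct and matches the paper's approach. However, there is a genuine gap in your treatment of torsion-free quotients: the inference ``$Q$ surjects onto $G$, which has torsion, so $Q$ has torsion'' is false in general. Torsion does not pull back along surjections --- indeed $\overline{G}$ itself surjects onto $G$ yet is torsion-free, being a subgroup of $\Homeo^+(\mathbf{R})$. You flag this step as ``the only point requiring care'' but never actually supply the missing reasoning.

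The fix is what the paper does and what your own exploration nearly reaches: use that $N$ is a \emph{non-trivial} subgroup of $Z \cong \mathbf{Z}$ (this is where ``proper'' in ``proper quotient'' enters --- it gives $N \neq 0$, not merely $N \neq \overline{G}$). Then $N$ has finite index in $Z$, so the image of the central generator $z$ in $Q = \overline{G}/N$ has finite order; if $N \neq Z$ this image is non-trivial, producing torsion in $Q$, while if $N = Z$ then $Q \cong G$ already has torsion by property~(4). Equivalently, the kernel $Z/N$ of $Q \twoheadrightarrow G$ is finite, so a preimage in $Q$ of a non-trivial torsion element of $G$ is itself torsion. Either way, the finiteness of $Z/N$ is the missing ingredient that makes the transfer of torsion legitimate.
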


\begin{proof}
That every proper normal subgroup is central follows from Lemma~\ref{lem:normalgeneration}. The center itself corresponds to the quotient $G$, which has torsion. For any other non-trivial proper normal subgroup, the generator of the center will map to a non-trivial torsion element in the quotient. All non-trivial quotients surject onto $G$, and thus are infinite.
\end{proof}

Next we want to study the possible dynamics of $\overline{G}$ ($G\in \mathcal{G}$) on the line (see \cite[Theorem 8.7]{Tphi} for the case of Thompson's group $T$):

\begin{lemma}
\label{lem:lineactions}
For $G\in \mathcal{G}$, every fixed point-free action of $\overline{G}$ by homeomorphisms on the line is semiconjugate to the standard action.
\end{lemma}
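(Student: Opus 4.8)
The plan is to take a fixed point-free action $\rho$ of $\overline{G}$ on $\mathbf{R}$ and show that, after passing to the minimalization, it must be conjugate to the standard action; semiconjugacy to the standard action then follows since any action is semiconjugate to its minimalization. First I would invoke Proposition~\ref{prop:lineAction}: the minimal invariant closed set is discrete, all of $\mathbf{R}$, or an exceptional set $\Lambda$. The discrete case is ruled out immediately by Lemma~\ref{lem:TbarQuotients}, since it would force $\overline{G}$ to surject onto $\mathbf{Z}$, contradicting the fact that $\overline{G}$ has no non-trivial torsion-free quotients. In the exceptional case, the minimalization collapses $\Lambda$ and produces a minimal action of $\overline{G}$ (via a quotient $H$ as in Proposition~\ref{prop:minimalization}); since this quotient is non-trivial, it must have $\overline{G}$'s center in its kernel or not, but either way by Lemma~\ref{lem:TbarQuotients} the kernel is central, so $H$ is either $\overline{G}$ itself or a quotient surjecting onto $G$. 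The upshot is that I may as well assume $\rho$ is already minimal.

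So the crux is: a minimal fixed point-free action of $\overline{G}$ on $\mathbf{R}$ is conjugate to the standard one. Here I would use the center. The generator $z$ of $Z(\overline{G})\cong\mathbf{Z}$ acts as a fixed point-free homeomorphism commuting with $\rho(\overline{G})$ (it is fixed point-free because if it had a fixed point, the $\rho(\overline{G})$-invariance of its fixed-point set together with minimality would force $\rho(z)=\mathrm{id}$, i.e. $z\in\ker\rho$, but $\ker\rho$ is a proper normal subgroup hence central by Lemma~\ref{lem:TbarQuotients}, and since $z$ generates the center this would make $\rho(\overline{G})$ a quotient of $G$ acting on $\mathbf{R}$ — but a minimal action of $\overline{G}$ or $G$ on $\mathbf{R}$ with a central element acting trivially needs separate handling; more carefully, if $z\in\ker\rho$ then $\rho$ factors through $G$, and one argues $G$ has no minimal action on $\mathbf{R}$ because $G$ has torsion and a torsion element of $\Homeo^+(\mathbf{R})$ is trivial, contradicting faithfulness of the induced action of the simple group $G$). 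Given that $\rho(z)$ is fixed point-free and central, the quotient $\mathbf{R}/\langle \rho(z)\rangle$ is a circle on which $\overline{G}/\langle z\rangle = G$ acts minimally; moreover $\rho$ is conjugate to an action commuting with the translation $\rho(z)$, which after rescaling we may take to be the integer translation, exhibiting $\rho$ as the lift of a minimal action of $G$ on $\mathbf{S}^1$.

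To finish I would identify that circle action with the standard one. By Theorem~\ref{thm:prox}, the minimal action of $G$ on $\mathbf{S}^1$ is either proximal or, after quotienting by a finite cyclic group, becomes proximal; in either case we get a proximal action of $G$ on the circle, which by hypothesis (3) in the definition of $\mathcal{G}$ is conjugate to the unique proximal action. One then lifts this conjugacy back up: the possible minimal actions of $G$ on $\mathbf{S}^1$ with a given proximalization differ only by the choice of the finite-order centralizing rotation, and the possible lifts to $\mathbf{R}$ are controlled by the central extension $1\to\mathbf{Z}\to\overline{G}\to G\to 1$, which pins down the lifted action up to conjugacy as the standard lift $\overline{G}\curvearrowright\mathbf{R}$.

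The main obstacle I expect is the bookkeeping in this last step: keeping track of how the finite-order centralizer from Theorem~\ref{thm:prox}, the minimalization map from Proposition~\ref{prop:minimalization}, and the $\mathbf{Z}$-central extension interact, so as to conclude that the only minimal lift is the standard one rather than some finite cover of it. A clean way to handle this is to note that $\overline{G}$ itself — not a proper quotient — must act (by Lemma~\ref{lem:TbarQuotients} the action is faithful, since $\ker\rho$ is central and any nonzero central kernel kills $z$, handled above), that the central $\mathbf{Z}$ must act by a fixed point-free homeomorphism whose "translation length" generates the deck group of $\mathbf{R}\to\mathbf{S}^1$, and that this forces the circle action downstairs to be exactly the standard (proximal) action of $G$ with its standard Euler-class-compatible lift; uniqueness in hypothesis (3) then closes the argument.
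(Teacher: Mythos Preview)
Your outline is essentially correct and can be pushed through, but the paper's argument is considerably more streamlined and sidesteps exactly the bookkeeping you flag as the main obstacle. Two differences are worth noting.

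First, rather than quotienting by $\rho(z)$ and obtaining a merely \emph{minimal} action of $G$ on the circle (which then forces you to invoke Theorem~\ref{thm:prox} and track a possible finite cyclic centralizer), the paper uses the type~1/2/3 trichotomy together with the fact that a group with a cofinal central element admits no faithful minimal type~3 action on the line \cite[Proposition 3.5.12]{book}. This immediately places the minimalization $\rho'$ in type~2, and then Proposition~\ref{prop:type2:prox} (Malyutin) furnishes a fixed point-free $t$ centralizing $\rho'(\overline{G})$ such that the induced circle action is already \emph{proximal}. Since a proximal action has trivial centralizer, the center of $\overline{G}$ dies in the quotient and one lands directly on a proximal action of $G$, to which hypothesis~(3) applies. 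No finite-cover ambiguity ever arises.

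Second, the ``lifting back'' step that you leave vague is handled in the paper by a one-line cocycle argument: once the circle action is conjugated to the standard one, any lift to $\mathbf{R}$ commuting with integer translations has the form $\rho''(h)=\rho_{st}(h)\,z^{i_h}$ for some map $i:\overline{G}\to\mathbf{Z}$; centrality of $z$ makes $i$ a homomorphism, and perfection of $\overline{G}$ forces $i\equiv 0$. This replaces your appeal to Euler classes and translation lengths. Your route via $\rho(z)$ would eventually reach the same cocycle argument, but only after first arguing that the Malyutin element $t$ is a root of $\rho(z)$ and that the intermediate finite cover is trivial; the paper's use of Proposition~\ref{prop:type2:prox} makes that detour unnecessary.
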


\begin{proof}
Let $\rho$ be a fixed point-free action of $\overline{G}$ by orientation-preserving homeomorphisms on the line. Note that $\rho$ cannot fix a Radon measure because $\overline{G}$ is perfect, hence non-indicable. Therefore via a semiconjugacy, we pass to a minimal action (Proposition \ref{prop:minimalization}), which we denote as $\rho'$.
We also know from Lemma \ref{lem:TbarQuotients} that $\rho'$ must be a faithful action of $\overline{G}$.
Since $\overline{G}$ has a cofinal central element, it cannot admit a minimal faithful type $3$ action \cite[Proposition 3.5.12]{book}.
Therefore, $\rho'$ is a type $2$ action. By Proposition \ref{prop:type2:prox}, there exists a fixed point-free homeomorphism $t$ that commutes with $\rho'(G)$ and such that the induced action of $G$ on $\mathbf{R}/ \langle t \rangle \cong \mathbf{S}^1$ is proximal.

Now using our rigidity assumption on proximal actions of groups in $\mathcal{G}$, this action is even topologically conjugate to the standard action of $G$. Lifting this conjugacy, we obtain that $\rho'$ is conjugate to an action $\rho''$ of $\overline{G''}$ on the line that sends the center to itself and induces the standard representation on the quotient $G$. In other words, we can write $\rho''(g) = \rho_{st}(g) z^{i_g}$, where $z$ is the generator of the center, and $i_g \in \mathbf{Z}$. Since $z$ is central, the map $g \mapsto i_g$ is a homomorphism, which must be trivial because $\overline{G}$ is perfect. Therefore $\rho'' = \rho_{st}$ and we conclude.
\end{proof}

Finally we consider actions of $\overline{G}$ on the circle:

\begin{proposition}
\label{prop:rot:G}

Let $G \in \mathcal{G}$. Let $\rho$ be an action of $\overline{G}$ on the circle without global fixed points, and let $\rho_{st}$ be the standard action, i.e., the one induced by the unique (up to conjugacy) proximal action of the quotient $G$. Then, for all $g \in \overline{G}$ it holds that
$$ \rot(\rho(g)) \in \mathbf{Q} / \mathbf{Z} \iff \rot(\rho_{st}(g)) \in \mathbf{Q} / \mathbf{Z}.$$
\end{proposition}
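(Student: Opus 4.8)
The plan is to reduce this to Proposition~\ref{prop:rot:general}, which already handles the passage from an action to its proximalization. The key observation is that both sides of the biconditional should ultimately be compared against the rotation numbers of a \emph{proximal} action of $\overline{G}$, and then we invoke the rigidity hypothesis (item (3) in the definition of $\mathcal{G}$) to identify that proximal action with the one coming from the standard action.

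First I would dispose of degenerate cases. If $\rho$ has a finite orbit on $\mathbf{S}^1$, then since $\overline{G}$ has no non-trivial finite quotients (Lemma~\ref{lem:TbarQuotients}) the action must factor through a finite cyclic group acting by rotations... wait, more carefully: a finite orbit gives a homomorphism from $\overline{G}$ to a finite cyclic group (the action permutes the orbit points), whose kernel acts with a global fixed point; but $\overline{G}$ has no non-trivial finite quotients, so the whole action fixes the finite orbit pointwise, contradicting that $\rho$ has no global fixed points. Hence the action has no finite orbit, so by Theorem~\ref{thm:circleAction} it is either minimal or has a unique exceptional minimal set, and in either case the minimalization $\rho_m$ is defined and, being a faithful quotient of $\overline{G}$ by Lemma~\ref{lem:TbarQuotients} (again using that $\overline{G}$ has no proper non-central normal subgroups, and the minimalization's kernel would be such), is a faithful minimal action. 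I should check that $\rho_m(\overline{G})$ is non-abelian: if it were abelian, then $\overline{G}$ would have an abelian faithful quotient, impossible since $\overline{G}$ is perfect and non-trivial. So the proximalization $\rho_p$ exists, and by Proposition~\ref{prop:rot:general} there is $n \geq 1$ with $\rot(\rho_p(g)) = n\,\rot(\rho(g))$ for all $g$; in particular $\rot(\rho(g))$ is rational iff $\rot(\rho_p(g))$ is rational.

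Next I would analyze $\rho_p$. It is a faithful proximal action of $\overline{G}$ on $\mathbf{S}^1$. The central element $z$ acts with rotation number $\rot(\rho_p(z))$; since $\rho_p(\overline{G})$ has trivial centralizer in $\Homeo^+(\mathbf{S}^1)$ (Theorem~\ref{thm:prox}(1)) and $z$ is central in $\overline{G}$, $\rho_p(z)$ must be trivial, so $\rho_p$ factors through $G = \overline{G}/\langle z\rangle$, yielding a proximal action of $G$ on $\mathbf{S}^1$. By the rigidity hypothesis, this action is topologically conjugate to the standard proximal action of $G$, and hence $\rot(\rho_p(g)) = \rot(\rho_{st}(\bar g))$ for all $g \in \overline{G}$, where $\bar g$ is the image of $g$ in $G$ and $\rho_{st}$ here is regarded as the action of $\overline{G}$ induced by the standard action of $G$ (precisely the $\rho_{st}$ in the statement, since $\rho_{st}$ also factors through $G$). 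Combining with the previous paragraph: $\rot(\rho(g))$ rational $\iff$ $\rot(\rho_p(g))$ rational $\iff$ $\rot(\rho_{st}(g))$ rational, which is exactly the claim.

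The main obstacle I anticipate is the bookkeeping around faithfulness and the non-abelian hypothesis at each stage — making sure that the minimalization of $\rho$ really is faithful (so that Lemma~\ref{lem:TbarQuotients}'s description of normal subgroups applies and forces the central element to act trivially after proximalization), and that $\rho_m(\overline{G})$ is genuinely non-abelian so that Proposition~\ref{prop:rot:general} and Theorem~\ref{thm:prox} are available. None of these is deep, but each uses a different one of the structural facts (perfectness, simplicity of $G$, no finite quotients) established in Lemmas~\ref{lem:normalgeneration} and~\ref{lem:TbarQuotients}, so the proof is mostly a careful assembly of the earlier results rather than a new idea. A secondary subtlety is confirming that $\rho_{st}$ as an action of $\overline{G}$ indeed factors through $G$ (true by construction, since it is \emph{defined} as the action induced by an action of the quotient $G$), so that comparing $\rho_p$ and $\rho_{st}$ through $G$ is legitimate.
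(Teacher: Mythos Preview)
Your proposal is correct and follows essentially the same route as the paper: rule out finite orbits, minimalize, check non-abelianness, proximalize, observe that the center dies (since a proximal action has trivial centralizer, Theorem~\ref{thm:prox}(1)), factor through $G$, and invoke the rigidity hypothesis together with Proposition~\ref{prop:rot:general}. One cosmetic slip worth fixing: you call $\rho_m$ and then $\rho_p$ \emph{faithful}, but this is neither true in general (indeed you immediately show $\rho_p(z)=1$) nor needed --- your non-abelianness argument goes through using only perfectness of $\overline{G}$ and non-triviality of $\rho_m(\overline{G})$, and nothing later uses faithfulness.
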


\begin{proof}
Let $\rho_m$ be the minimalization of $\rho$. Since every non-trivial quotient of $\overline{G}$ surjects onto $G$, $\rho_m(\overline{G})$ is not abelian, so we can produce a proximalization $\rho_p$. By Theorem \ref{thm:prox} the centralizer of $\rho_p(\overline{G})$ is trivial, therefore the whole center of $\overline{G}$ lies in the kernel of $\rho_p$. It follows that $\rho_p$ induces a proximal action of $G$, and so by the definition of $\mathcal{G}$, we deduce that $\rho_p$ is conjugate to $\rho_{st}$. We conclude by Proposition \ref{prop:rot:general} and the fact that rotation numbers are conjugacy-invariant \cite[Section 5.1]{ghys:book}.
\end{proof}

\subsection{Examples}

Here we provide some examples of groups in $\mathcal{G}$. We start with the most well-known example:

\begin{example}[Thompson]
\label{ex:T}

Our first example is Thompson's group $T$, which is the group of orientation-preserving piecewise linear homeomorphisms of the circle $\mathbf{R}/\mathbf{Z}$ stabilizing $\mathbf{Z}\left[\frac{1}{2}\right]$, with slopes in $2^{\mathbf{Z}}$ and breakpoints in $\mathbf{Z}\left[\frac{1}{2}\right]$. It is well-known that $T$ lies in $\mathcal{G}$ and that it is of type $F_\infty$; all necessary facts can be found in \cite{cfp, GhysSergiescu, Brown}. For the uniqueness of the action on the circle, this is only stated explicitly in \cite{GhysSergiescu} in a weaker form (uniqueness among $C^2$ actions), however it can be deduced from their cohomological computations, or from the rest of the discussion in this section.
\end{example}

For the remainder of the examples, simplicity and torsion can be found in the literature, and perfection of the lift is easy to check and can sometimes be reduced to the case of $T$. The uniqueness of the proximal action is less well-studied, but can be established via the following general criterion:

\begin{theorem}[{\cite[Proposition 6.9]{binate}}, {\cite[Corollary 4.2]{spectrum}}]\label{thrm:unique_prox}
Let $G \le \Homeo^+(\mathbf{S}^1)$ be either piecewise linear or piecewise projective. Suppose that there exists a dense orbit $Y \subset \mathbf{S}^1$ such that the action of $G$ on positively oriented triples in $Y$ is transitive. Then the given action of $G$ on $\mathbf{S}^1$ is proximal, and it is the unique proximal action of $G$ on the circle, up to conjugacy.
\end{theorem}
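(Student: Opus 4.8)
I would prove the two conclusions separately.

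\emph{Proximality.} Fix a nonempty open arc $J$ and a proper closed arc $I\subsetneq\mathbf{S}^1$ (a single point is the degenerate case, and an arbitrary proper arc reduces to its closure). Since $Y$ is dense it is infinite, and so is $Y\cap J$; choose $z_1,z_2,z_3\in Y\cap J$ occurring in this positive cyclic order inside $J$, so that the sub-arc of $J$ from $z_1$ to $z_2$ avoiding $z_3$ is contained in $J$. Using density again, choose $y_1,y_2\in Y$ lying just outside $I$ at its two endpoints, so that $I$ is contained in the open arc $A$ from $y_1$ to $y_2$ on the side of $I$, and then $y_3\in Y$ in the complementary open arc; thus $(y_1,y_2,y_3)$ is positively oriented and $A$ is precisely the arc from $y_1$ to $y_2$ missing $y_3$. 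By the transitivity hypothesis there is $f\in G$ with $y_i\cdot f=z_i$ for $i=1,2,3$. Since $f$ is orientation-preserving it carries $A$ onto the arc from $z_1$ to $z_2$ missing $z_3$, which lies in $J$; hence $I\cdot f\subset A\cdot f\subset J$. (Taking $I$ to be a point also shows that every orbit is dense, so the action is minimal.)

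\emph{Uniqueness.} Write $\rho_0$ for the given action and let $\rho_1\colon G\to\Homeo^+(\mathbf{S}^1)$ be an arbitrary proximal action; since a proximal action is minimal, all orbits in both actions are dense, and it suffices to construct a $G$-equivariant homeomorphism $h\colon(\mathbf{S}^1,\rho_1)\to(\mathbf{S}^1,\rho_0)$. The plan is to build $h$ first on a dense orbit and then extend it using the cyclic order. Fix $p\in Y$ and put $P:=\mathrm{Stab}_{\rho_0}(p)$, so $Y=G/P$ as a $G$-set. Transitivity on positively oriented triples of $Y$ forces, since exchanging two of the three points reverses orientation, that $G$ has exactly two orbits on the set of distinct ordered triples of $G/P$, the positively and the negatively oriented ones; hence the circular betweenness relation on $Y$, and so the cyclic order up to its orientation, depends only on the abstract pair $(G,P)$. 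The heart of the argument is then to locate a point $q\in\mathbf{S}^1$ with $\mathrm{Stab}_{\rho_1}(q)=P$: the subgroup $P$ is ``large'' — for instance its own point stabilizers act $2$-transitively on the ordered set $Y\setminus\{p\}$ — and one expects it to be pinned down group-theoretically and realized as a point stabilizer of $\rho_1$. Granting such a $q$, the orbit map $gP\mapsto\rho_1(g)q$ is a $G$-equivariant bijection from $Y$ onto the dense orbit $G\cdot q$ which, respecting the common betweenness relation, is an isomorphism of cyclically ordered sets; since $Y$ and $G\cdot q$ are dense in their respective circles, this isomorphism extends to an order-preserving bijection $\mathbf{S}^1\to\mathbf{S}^1$, i.e.\ a homeomorphism, which is $G$-equivariant by density and continuity — this is the desired $h$, up to composing with a reflection to correct the orientation.

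\emph{Main obstacle.} The delicate step, and the only place the piecewise-linear or piecewise-projective hypothesis enters, is the identification and relocation of the stabilizer $P$: one must exclude the possibility that some exotic proximal action fails to realize $P$ as a point stabilizer. Here I would exploit the hypothesis to control the local dynamics of elements of $G$ — finitely many breakpoints, and linear or projective germs at fixed points — so as to characterize point stabilizers intrinsically (e.g.\ as the elements sharing a prescribed germ at a common fixed point) and, using proximality to manufacture elements with controlled attracting and repelling fixed points, to determine where in $(\mathbf{S}^1,\rho_1)$ such a fixed point must lie. Once $P$ is identified and placed, the remaining verifications — that the orbit map preserves betweenness and that it extends continuously — are routine and use only minimality of the two actions.
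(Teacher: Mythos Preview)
The paper does not give a proof of this theorem: it is imported from \cite{binate} and \cite{spectrum} and stated without argument. There is therefore no in-paper proof to compare against, and I assess your proposal on its own.

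Your proximality argument is correct and essentially complete; choosing a triple in $Y$ straddling $I$ and a triple in $Y\cap J$, and using transitivity on positively oriented triples to carry one to the other, is exactly the right mechanism.

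For uniqueness, the architecture you describe is sound --- identify $P=\mathrm{Stab}_{\rho_0}(p)$, observe that the two-orbit structure on ordered triples forces the $G$-invariant cyclic order on $G/P$ to be unique up to reversal, realize $P$ as a point stabilizer of $\rho_1$, and extend the resulting order isomorphism of dense orbits to a conjugacy. But the entire substance of the theorem lives in the step you yourself label the ``main obstacle'' and then do not carry out. Saying you would ``exploit the hypothesis to control the local dynamics'' so as to ``characterize point stabilizers intrinsically'' is a description of what needs to happen, not an argument that it does; this is precisely where the piecewise linear/projective hypothesis does work in the cited references, and it is not a routine verification (one route is to produce a large subgroup of $P$ --- for instance the elements supported away from $p$ --- that must have a global fixed point in \emph{any} circle action, and then use proximality of $\rho_1$ to recover the rest). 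You also elide a second point: you need $\mathrm{Stab}_{\rho_1}(q)=P$, not merely $P\le\mathrm{Stab}_{\rho_1}(q)$, since otherwise the orbit map $G/P\to G\cdot q$ is not injective and the cyclic-order comparison no longer produces a bijection. As written, the uniqueness half is a correct outline with its single load-bearing step left open.
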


This next example provides an infinite family of groups in $\mathcal{G}$, which will be used in the proof of Corollary~\ref{cor:main}:

\begin{example}[Stein--Thompson]
\label{ex:T23}

For $n_1,\dots,n_k\in\mathbf{N}$ ($n_i\ge 2$) let $\lambda := n_1\cdots n_k$, let $Y := \mathbf{Z}\left[\frac{1}{\lambda}\right]/\mathbf{Z}$ and let $P := \langle n_1, \ldots, n_k \rangle \leq \mathbf{R}^{\times}$. The \emph{Stein--Thompson group} $T_{n_1,\dots,n_k}$ is the group of orientation-preserving piecewise linear homeomorphisms of the circle $\mathbf{R}/\mathbf{Z}$, stabilizing $Y$, with slopes in $P$ and breakpoints in $Y$; see \cite{stein}. When $k=1$ and $n_1=2$, we recover the classical Thompson's group $T$.

We claim that any Stein--Thompson group of the form $G := T_{2,3,n_3,\dots,n_k}$, is in $\mathcal{G}$, for example $T_{2,3}$. All Stein--Thompson groups clearly have torsion, and these particular Stein--Thompson groups are all simple \cite{guelman}. Since $\overline{T} \leq \overline{G}$ we deduce that the group of translations is contained in the commutator subgroup of $\overline{G}$. Together with simplicity of $G$, this shows that $\overline{G}$ is perfect. 

Finally, we claim that Theorem~\ref{thrm:unique_prox} applies, with $Y$ as above; in fact, we claim that the action is transitive on positively ordered $p$-tuples for all $p \geq 1$. Let $x_1, \ldots, x_p$ and $y_1, \ldots, y_p$ be two positively ordered $p$-tuples in $Y$. Then there exists some $q \geq 1$ such that the representatives of all $x_i$ and $y_i$ in $[0, 1)$ are integer multiples of $\lambda^{-q}$. Setting $\mathbf{x} = \mathbf{y} = \{ 0, \ldots, 1 - \lambda^{-q} \} \mod \mathbf{Z}$ includes both $p$-tuples inside larger tuples $\mathbf{x}, \mathbf{y}$ with the additional property that the difference between two consecutive points lies in $P$. Now, adding the midpoint of two consecutive points to $\mathbf{x}$ or $\mathbf{y}$ keeps the elements of the tuple inside $Y$, and does not change the property that the difference between two consecutive points in $\mathbf{x}$ or $\mathbf{y}$ lies in $P$. Therefore we can add midpoints inductively to $\mathbf{x}$ or $\mathbf{y}$ to ensure that for each $i = 1, \ldots, p$, the number of points in the arc $\mathbf{x} \cap (x_{i-1}, x_i)$ is the same as the number of points in the arc $\mathbf{y} \cap (y_{i-1}, y_i)$ (where we abuse the notation and interpret $x_0, y_0$ as $x_p, y_p$). This ensures that the piecewise linear homeomorphism of $\mathbf{S}^1$ sending $\mathbf{x}$ to $\mathbf{y}$ and interpolating linearly also sends $x_i$ to $y_i$ for each $i$. Since $\mathbf{x}$ and $\mathbf{y}$ are contained in $Y$, and the difference between two consecutive points is in the multiplicative group $P$, the corresponding homeomorphism of the circle is an element of $G$ that sends $x_i$ to $y_i$ for each $i$, as desired.

This shows that $T_{2,3,n_3,\dots,n_k}$ is in $\mathcal{G}$. Also note that all Stein--Thompson groups are of type $F_\infty$ \cite{stein}, and that any $T_{2,3,n_3,\dots,n_k}$ has elements with irrational rotation number \cite{liousse}.
\end{example}

We end with two more exotic examples.

\begin{example}[Golden Ratio Thompson]
Let $\tau := \frac{\sqrt{5} - 1}{2}$ be the small golden ratio. The Golden Ratio Thompson's group $T_{\tau}$ is a circle version of Cleary's Golden Ratio Thompson's group $F_{\tau}$ \cite{Ftau1, Ftau2}, introduced in \cite{Ttau}. The commutator subgroup $T_\tau'$ has index $2$ and is simple \cite[Thoerem 3.2]{Ttau}. Moreover, the action of $T_\tau'$ on $\mathbf{Z}[\tau]$ is transitive on positively oriented $n$-tuples for all $n$ \cite[Lemmas 5.5 and 5.7]{spectrum}, so Theorem~\ref{thrm:unique_prox} applies to $T_\tau'$. Moreover, $T_\tau'$ is of type $F_\infty$ \cite[Proposition 5.8]{spectrum}, and the lift $\overline{T_\tau'}$ is perfect \cite[Lemma 5.13]{spectrum}. Thus, $T_\tau'\in\mathcal{G}$. Finally, $T_\tau'$ clearly has elements with irrational rotation number, for instance the rotation by $2 \tau$ (see \cite{spectrum} for more detail).
\end{example}

\begin{example}[Piecewise projective]
The group $S$ introduced in \cite{S} is simple, has torsion, and is of type $F_\infty$ \cite{SFinfty}. It is a group of piecewise projective homeomorphisms of the circle $\mathbf{R}\cup\{\infty\}$, which is the real projective line. The action of $S$ preserves $\mathbf{Q}\cup \{\infty\}$, and it acts transitively on positively oriented triples of this set. (Indeed, the copy of $T$ in $S$ already acts transitively on such triples). Thus Theorem~\ref{thrm:unique_prox} applies to $S$. Moreover, $\overline{S}$ is perfect with a similar argument as above: its derived subgroup contains $\overline{T}$, and therefore all of the integer translations. Thus $S\in\mathcal{G}$. We do not know whether $S$ has elements with irrational rotation number.
\end{example}

\section{Monstrousness and the proof of Theorem \ref{thm:main}}

Now we can prove our main result.

\begin{proof}[Proof of Theorem \ref{thm:main}]
We recall our hypothesis: $G_1,G_2\in \mathcal{G}$ are groups such that $G_2$ contains an element with an irrational rotation number. For example, for the special case in Theorem~\ref{thm:main23} one can use $G_1=T$ (Example \ref{ex:T}) and $G_2=T_{2,3}$ (Example \ref{ex:T23}). We denote as usual by $\overline{G}_i$ the lift of the standard action of $G_i$, we let $z$ denote the generator of the center of $G_1$, and let $g\in \overline{G}_2$ denote a lift of an element of $G_2$ with irrational rotation number. \\ 

{\bf Ad 1}: 
The left orderability of $M$ follows from the fact that an amalgamated product of two left orderable groups along a cyclic subgroup is left orderable \cite{bludov:glass:cyclic} (see \cite{bludov:glass} for a more general result).

Being an amalgamated product of two perfect groups, $M$ is perfect, therefore it has no type 1 action. Now suppose by contradiction that $M$ admits a fixed point-free type $2$ action $\rho : M \to \Homeo^+(\mathbf{R})$, which we may assume to be minimal by Proposition \ref{prop:minimalization}. This implies the existence of a fixed point-free element $t \in \Homeo^+(\mathbf{R})$ that commutes with $\rho(M)$.
Up to conjugating our action by a suitable homeomorphism, we may assume for the rest of the proof that $x\cdot t=x+1$ for each $x\in \mathbf{R}$ (this follows for example from H{\"o}lder's theorem \cite[Theorem 3.1.2]{book}).
Also, we declare $ H_1=\rho(\overline{G}_1)$ and $H_2=\rho(\overline{G}_2)$.

We first claim that neither $H_1$ nor $H_2$ have global fixed points in $\mathbf{R}$. Indeed, suppose by contradiction that $H_1$ fixes a point $x$ (the proof for $H_2$ is identical). Since $\rho(M)$ has no global fixed points, $H_2$ does not fix $x$. Thus there is an $H_2$-invariant open interval $J$ containing $x$ on which the restriction of the action of $H_2$ is fixed point-free. Since $\overline{G}_2$ does not have proper non-trivial torsion-free quotients by Lemma \ref{lem:TbarQuotients}, this action on $J$ is faithful.

By Lemma \ref{lem:lineactions}, the action of $H_2$ on $J\cong\mathbf{R}$ is semiconjugate to the standard action. In particular, $\rho(g)$ does not fix a point in $J$: otherwise the rotation number of the image of $g$ in $G_2$ would be $0$. But looking at the action of all of $M$, this contradicts the fact that $\rho(g) = \rho(z) \in H_1$ fixes $x$. This proves our claim.
Moreover, it follows from Lemma \ref{lem:lineactions} that $H_1$ and $H_2$ are semiconjugate to their standard actions on the real line.

Now the quotient $\mathbf{R} \to \mathbf{R}/\langle t \rangle \cong \mathbf{S}^1$ induces a homomorphism $\phi:M\to \Homeo^+(\mathbf{S}^1)$ with kernel $\langle t\rangle \cap M$. The restriction of $\phi$ to each $\overline{G}_i$ is fixed point-free on $\mathbf{S}^1$. Indeed, if it fixed a point then this fixed point would lift to a discrete orbit, which would have to be fixed pointwise because $\overline{G}_i$ is perfect (and hence non-indicable). But we have already shown that the restriction of $\rho$ to $\overline{G}_i$ has no global fixed points. 

We can therefore use Proposition \ref{prop:rot:G} on $\overline{G}_1$ to obtain $\rot(\phi(z)) \in \mathbf{Q}/\mathbf{Z}$, and then on $\overline{G}_2$ to obtain $\rot(\phi(g)) \notin \mathbf{Q}/\mathbf{Z}$. But $\phi$ is an action of $M$, so $\phi(g) = \phi(z)$, a contradiction. \\

{\bf Ad 2}: It is a standard fact that an amalgamation of two type $F_\infty$ groups along a type $F_\infty$ subgroup is of type $F_\infty$; indeed, the amalgamation acts cocompactly on a tree with type $F_{\infty}$ vertex and edge stabilizers, and hence is itself $F_\infty$ for example by Propositions~1.1 and~3.1 of \cite{Brown}. The same holds for other finiteness properties, such as finite presentability. Therefore, part $2.$ of the Theorem follows. \\

{\bf Ad 3}: We use the following fact \cite{bc:amalgam1} (see also \cite{bc:amalgam2}): an amalgamated product $A *_C B$ has an infinite-dimensional space of homogeneous quasimorphisms whenever $C \neq B$ and $C$ has at least $3$ double cosets in $A$. This automatically holds if $C$ is not maximal in $A$, in particular it applies to $M$, which therefore has an infinite-dimensional space of homogeneous quasimorphisms. Being moreover perfect, all such quasimorphisms represent non-trivial classes in bounded cohomology, and witness that $M$ is not uniformly perfect \cite{scl}. \\

{\bf Ad 4}: The fact that $M$ admits a proximal action follows from the fact that the quotient of $M$ by the normal closure of $\overline{G}_2$ is $G_1$, which acts proximally on the circle.
 We now show that this action of $M$ is unique (up to conjugacy).

 Let $\rho$ be a proximal action of $M$ on the circle. 
 First we claim that $\rho(g)$ fixes a point.
 Assume otherwise. In this case, we obtain that both actions  $\rho(\overline{G}_1),\rho(\overline{G}_2)$ are fixed point-free, since the element $\rho(g)=\rho(z)$ acts without fixed points on the circle and is contained in both actions. Since $\overline{G}_1,\overline{G}_2$ do not admit non-trivial finite quotients by Lemma \ref{lem:TbarQuotients}, it further follows that $\rho(\overline{G}_1),\rho(\overline{G}_2)$ do not admit a finite orbit. 
This means that they both admit proximalizations, using Theorems \ref{thm:circleAction} and \ref{thm:prox}.
Now using the same argument as in the proof of part $1$, we obtain the contradiction that $\rho(g)=\rho(z)$ has both a rational and an irrational rotation number. This proves our claim that $\rho(g)$ fixes a point in the circle.

Next, we claim that $\rho(\overline{G}_2)$ fixes a point in the circle.
If not, again it also cannot admit a finite orbit, so by Theorems~\ref{thm:circleAction} and~\ref{thm:prox} it must admit a proximalization $\rho'$. But since this is conjugate to the standard action (by our assumption on groups in $\mathcal{G}$), this means that $\rho'(g)$ has an irrational rotation number. But this is impossible, since $\rho(g),\rho'(g)$ are semiconjugate, and $\rho(g)$ admits a fixed point, so it has rotation number $0$. This proves that $\rho(\overline{G}_2)$ fixes a point in the circle.

The complement of $Fix(\rho(\overline{G}_2))$ is a union of open intervals, each of which are individually $\rho(\overline{G}_2)$-invariant. Since $\overline{G}_2$ does not admit a nontrivial left orderable quotient by Lemma \ref{lem:TbarQuotients}, the action of $\rho(\overline{G}_2)$ on each these intervals is faithful. Thanks to Lemma \ref{lem:lineactions}, each of these is semiconjugate to the standard action of $\overline{G}_2$ on $\mathbf{R}$. Since in the standard action, $g$ does not fix a point, this implies that $Fix(\rho(\overline{G}_2))=Fix(\rho(g))$.

Now let $X=Fix(\rho(\overline{G}_2))=Fix(\rho(g)) \subset \mathbf{S}^1$. Clearly, $X$ is a closed subset of the circle, and $X$ is $\rho(\overline{G}_1)$-invariant, since $Fix(\rho(g))=Fix(\rho(z))$ and $\rho(z)$ is central in $\rho(\overline{G}_1)$. Hence $X$ is a closed non-empty $\rho(M)$-invariant set. By the minimality of $\rho(M)$, we conclude that $X=\mathbf{S}^1$. It follows that the element $\rho(g)=\rho(z)$ fixes the circle pointwise, and hence lies in the kernel $\rho:M\to \Homeo^+(\mathbf{S}^1)$. Therefore $\rho(M)$ descends to a proximal action of $G_1$, which is unique by our assumption.
\end{proof}

We conclude with the main corollary.

\begin{proof}[Proof of Corollary \ref{cor:main}]

Let $G_1,G_2,M$ be as in the statement of Theorem \ref{thm:main}, and moreover assume that they are finitely presented (or type $F_\infty$).
Part $4$ of Theorem \ref{thm:main} tells us that $G_1$ is an invariant for the isomorphism type of $M$. The class $\mathcal{G}$ contains the finitely presented (even type $F_\infty$) groups $T_{2,3,n_3,\dots,n_k}$ (Example \ref{ex:T23}), which are pairwise non-isomorphic \cite{liousse}. We conclude that finitely presented (even type $F_\infty$) $M$ can also take on infinitely many isomorphism types.
\end{proof}

\footnotesize

\bibliographystyle{abbrv}
\bibliography{references}

\begin{thebibliography}{10}

\bibitem{bludov:glass:cyclic}
V.~V. Bludov and A.~M.~W. Glass.
\newblock Conjugacy in lattice-ordered groups and right ordered groups.
\newblock {\em J. Group Theory}, 11(5):623--633, 2008.

\bibitem{bludov:glass}
V.~V. Bludov and A.~M.~W. Glass.
\newblock Word problems, embeddings, and free products of right-ordered groups
  with amalgamated subgroup.
\newblock {\em Proc. Lond. Math. Soc. (3)}, 99(3):585--608, 2009.

\bibitem{Brown}
K.~S. Brown.
\newblock Finiteness properties of groups.
\newblock In {\em Proceedings of the {N}orthwestern conference on cohomology of
  groups ({E}vanston, {I}ll., 1985)}, volume~44, pages 45--75, 1987.

\bibitem{lattices:bm}
M.~Burger and N.~Monod.
\newblock Bounded cohomology of lattices in higher rank {L}ie groups.
\newblock {\em J. Eur. Math. Soc. (JEMS)}, 1(2):199--235, 1999.

\bibitem{rigidity}
M.~Burger and N.~Monod.
\newblock Continuous bounded cohomology and applications to rigidity theory.
\newblock {\em Geom. Funct. Anal.}, 12(2):219--280, 2002.

\bibitem{burger:mozes}
M.~Burger and S.~Mozes.
\newblock Lattices in product of trees.
\newblock {\em Inst. Hautes \'{E}tudes Sci. Publ. Math.}, (92):151--194 (2001),
  2000.

\bibitem{Ftau2}
J.~Burillo, B.~Nucinkis, and L.~Reeves.
\newblock An irrational-slope {T}hompson's group.
\newblock {\em Publ. Mat.}, 65(2):809--839, 2021.

\bibitem{Ttau}
J.~Burillo, B.~Nucinkis, and L.~Reeves.
\newblock Irrational-slope versions of {T}hompson's groups {$T$} and {$V$}.
\newblock {\em Proc. Edinb. Math. Soc. (2)}, 65(1):244--262, 2022.

\bibitem{scl}
D.~Calegari.
\newblock {\em scl}, volume~20 of {\em MSJ Memoirs}.
\newblock Mathematical Society of Japan, Tokyo, 2009.

\bibitem{cfp}
J.~W. Cannon, W.~J. Floyd, and W.~R. Parry.
\newblock Introductory notes on {R}ichard {T}hompson's groups.
\newblock {\em Enseign. Math. (2)}, 42(3-4):215--256, 1996.

\bibitem{Ftau1}
S.~Cleary.
\newblock Regular subdivision in {$\bold Z[\frac{1+\sqrt 5}{2}]$}.
\newblock {\em Illinois J. Math.}, 44(3):453--464, 2000.

\bibitem{deroin:hurtado}
B.~Deroin and S.~Hurtado.
\newblock Non left-orderability of lattices in higher rank semi-simple {L}ie
  groups.
\newblock {\em arXiv preprint arXiv:2008.10687}, 2020.

\bibitem{book}
B.~Deroin, A.~Navas, and C.~Rivas.
\newblock Groups, orders, and dynamics.
\newblock {\em arXiv preprint arXiv:1408.5805}, 2014.

\bibitem{spectrum}
F.~Fournier-Facio and Y.~Lodha.
\newblock Second bounded cohomology of groups acting on $1 $-manifolds and
  applications to spectrum problems.
\newblock {\em Adv. Math.}, To appear. \textit{arXiv:2111.07931}, 2021.

\bibitem{binate}
F.~Fournier-Facio, C.~L{\"o}h, and M.~Moraschini.
\newblock Bounded cohomology and binate groups.
\newblock {\em J. Aust. Math. Soc.}, pages 1--36, 2022.

\bibitem{bc:amalgam2}
K.~Fujiwara.
\newblock The second bounded cohomology of an amalgamated free product of
  groups.
\newblock {\em Trans. Amer. Math. Soc.}, 352(3):1113--1129, 2000.

\bibitem{lattices:ghys}
E.~Ghys.
\newblock Actions de r\'{e}seaux sur le cercle.
\newblock {\em Invent. Math.}, 137(1):199--231, 1999.

\bibitem{ghys:book}
E.~Ghys.
\newblock Groups acting on the circle.
\newblock {\em Enseign. Math. (2)}, 47(3-4):329--407, 2001.

\bibitem{GhysSergiescu}
E.~Ghys and V.~Sergiescu.
\newblock Sur un groupe remarquable de diff\'{e}omorphismes du cercle.
\newblock {\em Comment. Math. Helv.}, 62(2):185--239, 1987.

\bibitem{bc:amalgam1}
R.~I. Grigorchuk.
\newblock Bounded cohomology of group constructions.
\newblock {\em Mat. Zametki}, 59(4):546--550, 1996.

\bibitem{guelman}
N.~Guelman, I.~Liousse, and P.~Arnoux.
\newblock Uniform simplicity for subgroups of piecewise continuous bijections
  of the unit interval.
\newblock {\em Bull. Lond. Math. Soc.}, To appear. \textit{arXiv:2109.05706},
  2021.

\bibitem{Grho}
J.~Hyde and Y.~Lodha.
\newblock Finitely generated infinite simple groups of homeomorphisms of the
  real line.
\newblock {\em Invent. Math.}, 218(1):83--112, 2019.

\bibitem{Grho:up}
J.~Hyde, Y.~Lodha, A.~Navas, and C.~Rivas.
\newblock Uniformly perfect finitely generated simple left orderable groups.
\newblock {\em Ergodic Theory Dynam. Systems}, 41(2):534--552, 2021.

\bibitem{liousse}
I.~Liousse.
\newblock Rotation numbers in {T}hompson-{S}tein groups and applications.
\newblock {\em Geom. Dedicata}, 131:49--71, 2008.

\bibitem{S}
Y.~Lodha.
\newblock A finitely presented infinite simple group of homeomorphisms of the
  circle.
\newblock {\em J. Lond. Math. Soc. (2)}, 100(3):1034--1064, 2019.

\bibitem{SFinfty}
Y.~Lodha and M.~C.~B. Zaremsky.
\newblock The {BNSR}-invariants of the {L}odha-{M}oore groups, and an exotic
  simple group of type {${\rm F}_\infty$}.
\newblock {\em Math. Proc. Cambridge Philos. Soc.}, 174(1):25--48, 2023.

\bibitem{malyutin}
A.~V. Malyutin.
\newblock Classification of group actions on the line and the circle.
\newblock {\em Algebra i Analiz}, 19(2):156--182, 2007.

\bibitem{margulis}
G.~Margulis.
\newblock Free subgroups of the homeomorphism group of the circle.
\newblock {\em C. R. Acad. Sci. Paris S\'{e}r. I Math.}, 331(9):669--674, 2000.

\bibitem{Tphi}
N.~Matte~Bon and M.~Triestino.
\newblock Groups of piecewise linear homeomorphisms of flows.
\newblock {\em Compos. Math.}, 156(8):1595--1622, 2020.

\bibitem{Navas}
A.~Navas.
\newblock {\em Groups of circle diffeomorphisms}.
\newblock Chicago Lectures in Mathematics. University of Chicago Press,
  Chicago, IL, spanish edition, 2011.

\bibitem{questions}
A.~Navas.
\newblock Group actions on 1-manifolds: a list of very concrete open questions.
\newblock In {\em Proceedings of the {I}nternational {C}ongress of
  {M}athematicians---{R}io de {J}aneiro 2018. {V}ol. {III}. {I}nvited
  lectures}, pages 2035--2062. World Sci. Publ., Hackensack, NJ, 2018.

\bibitem{stein}
M.~Stein.
\newblock Groups of piecewise linear homeomorphisms.
\newblock {\em Trans. Amer. Math. Soc.}, 332(2):477--514, 1992.

\end{thebibliography}

\normalsize

\vspace{0.5cm}

\noindent{\textsc{Department of Mathematics, ETH Z\"urich, Switzerland}}

\noindent{\textit{E-mail address:} \texttt{francesco.fournier@math.ethz.ch}} \\

\noindent{\textsc{Department of Mathematics,
University of \Hawaii at \Manoa.}}

\noindent{\textit{E-mail address:} \texttt{lodha@hawaii.edu}} \\

\noindent{\textsc{Department of Mathematics and Statistics, University at Albany (SUNY)}}

\noindent{\textit{E-mail address:} \texttt{mzaremsky@albany.edu}}

\end{document}